\author{Robert \v S\'amal
  \thanks{Email: \texttt{samal@iuuk.mff.cuni.cz.}    
    Partially supported by grant GA \v{C}R P202-12-G061. 
    Partially supported by grant LL1201 ERC CZ of the Czech Ministry of Education, Youth and Sports.}
  \thanks{ Computer Science Institute, Charles University in Prague }
}
\title{Cubical coloring --- fractional covering by cuts and semidefinite programming
\thanks{Preliminary version of this research appeared as extended abstract~\cite{chiq-ea}.}
}
\date{}
\def\zet{{\mathbb Z}}
\def\Q{{\mathbb Q}}
\def\R{{\mathbb R}}
\def\MC{\mathop{\rm MAXCUT}\nolimits}
\let\eps\varepsilon
\def\scal   <#1,#2>{\langle #1,#2 \rangle}
\def\numTT  (#1,#2){\langle #1,#2 \rangle_{TT}}
\def\numTTi (#1,#2){\langle\!\langle #1,#2 \rangle_{TT}}
\def\numTTs (#1,#2){\langle #1,#2 \rangle\!\rangle_{TT}}
\def\numFF  (#1,#2){\langle #1,#2 \rangle_{FF}}
\def\numFFi (#1,#2){\langle\!\langle #1,#2 \rangle_{FF}}
\def\numFFs (#1,#2){\langle #1,#2 \rangle\!\rangle_{FF}}
\def\numXX  (#1,#2){\langle #1,#2 \rangle_{XX}}
\def\numXXi (#1,#2){\langle\!\langle #1,#2 \rangle_{XX}}
\def\numXXs (#1,#2){\langle #1,#2 \rangle\!\rangle_{XX}}
\let\phi\varphi
\def\?#1{\relax}
\def\zet{{\mathbb Z}}
\def\Pet{\mathord{\mathrm {Pt}}}
\let\eps\varepsilon
\def\Exp{\mathbb{E}}
\DeclareMathAccent{\myarrow}{\mathord}{letters}{"7E}
\def\chiq{\chi_q}
\newcommand{\generalXY}[2][]{\mathrel{\xrightarrow{\ifx @#1@ #2\else #2_{#1} \fi}}}
\newcommand{\ngeneralXY}[2][]{\mathrel{\thickspace\not\negthickspace\xrightarrow{\ifx @#1@ #2\else #2_{#1} \fi}}}
\renewcommand{\hom}{\generalXY{hom}}
\def\ceil #1{\left\lceil #1 \right\rceil}
\newtheorem{theorem}{Theorem}[section]
\newtheorem{lemma}[theorem]{Lemma}
\newtheorem{observation}[theorem]{Observation}
\newtheorem{corollary}[theorem]{Corollary}
\newtheorem{question}[theorem]{Question}
\newtheorem{conjecture}[theorem]{Conjecture}
\newenvironment{proof}{\par\medskip\noindent{\bf Proof: }}
  {\hskip 2cm\unskip\hbox{}\hfill$\Box$\par\bigskip}
\begin{document}
\maketitle
\begin{abstract}
We introduce a new graph parameter that measures fractional covering of a graph by cuts. 
Besides being interesting in its own right, it is useful for study of 
homomorphisms and tension-continuous mappings.
We study the relations with chromatic number, bipartite density, 
and other graph parameters. 

We find the value of our parameter for a family of 
graphs based on hypercubes. These graphs play for our parameter 
the role that cliques play for the chromatic number 
and Kneser graphs for the fractional chromatic number. 
The fact that the defined parameter attains on these graphs 
the correct value suggests that our definition is a natural one. 
In the proof we use the eigenvalue bound for maximum cut
and a recent result of Engström, Färnqvist, Jonsson, and Thapper
[An approximability-related parameter on graphs -- properties and applications, DMTCS vol. 17:1, 2015, 33--66].  

We also provide a polynomial time approximation algorithm based 
on semidefinite programming and in particular on vector chromatic 
number (defined by Karger, Motwani and Sudan 
[Approximate graph coloring by semidefinite programming, J. ACM 45 (1998), no.~2, 246--265]).  
\end{abstract}

\paragraph{Keywords}graph coloring, maxcut, covering, cut-continuous maps

\section{Introduction}

All graphs we consider are undirected and loopless; to avoid trivialities
we do not consider edgeless graphs. 
For a set $W \subseteq V(G)$ we let $\delta(W)$ denote the
set of edges leaving~$W$ and we call any set of form $\delta(W)$ a
\emph{cut}.
Other terminology we shall be using is standard, and can be found 
in, e.g., \cite{Diestel}.

Let us call a (cut) \emph{$n/k$-cover of~$G$} an $n$-tuple 
$(X_1, \dots, X_n)$ of cuts in~$G$ such that every edge of~$G$ is covered by at
least~$k$ of them. We define two closely related parameters of $G$. We let 
$$
  x(G) = \inf \Bigl\{ \frac nk \mid \hbox {exists $n/k$-cover of~$G$} \Bigr\}
$$
and call $x(G)$ the \emph{fractional cut-covering number} of~$G$.
Its rescaling 
$$
  \chiq(G) = \frac{2}{2-x(G)} 
$$ 
will be called the \emph{cubical chromatic number} of~$G$. 
This terminology is motivated by analogy with the fractional and circular 
chromatic number, see the discussion following Equation~\eqref{eq:chiqalt} below. 
The rescaling function $2/(2-x)$  
serves the purpose of aligning the value with other variants 
of chromatic number, namely of attaining the right value for complete graphs. 
However, the rescaling function is far from arbitrary, as 
the values for other graphs are also modified in a proper way, 
see Theorem~\ref{t:chivchiq}. 

If $k=1$, i.e., if we want to cover every edge at least once,
then we need at least $\ceil{\log_2 \chi(G)}$ of them~\cite{DNR}. 
Here we consider a fractional version. In this
context we may find it surprising that $x(G) < 2$ for every~$G$ 
(Corollary~\ref{xbounds}). 

From another perspective, $x(G)$ is the fractional chromatic number 
of a certain hypergraph: it has $E(G)$ as points and odd cycles
of $G$ as hyperedges. 
This suggests that $x(G)$ is a solution of a linear program, 
see Equations~\eqref{eq:LP1} and~\eqref{eq:LP2} below.

The parameter $x(G)$ has found surprising use in theoretical computer
science. Färnqvist, Jonsson, and Thapper~\cite{Svedi1} 
study the approximability of MAXCUT and its generalizations (so-called MAX-$H$-COLORING)
using a suitably defined metric space. The function used to define the
metric is in \cite{Svedi2} recognized as a natural generalization of 
fractional covering by cuts. See the concluding remarks for further discussion. 

As another point of view we note that $x(G)$ is 
a certain type of chromatic number, but instead of complete graphs
(or Kneser graphs or circulants) which are used to define chromatic number 
(or fractional or circular chromatic number) it uses another graph scale. 
Let $Q_{n/k}$ denote a graph with $\{0,1\}^n$ as the set of vertices, 
where $xy$ forms an edge iff $d(x,y) \ge k$ (here $d(x,y)$ is the
Hamming distance of $x$~and $y$). 

\begin{observation}
A graph has $n/k$-cover
if and only if it is homomorphic to $Q_{n/k}$. 
\end{observation}

\begin{proof}
If $(X_1, \dots, X_n)$ is a cut $n/k$-cover of a graph $G$ then 
we can define homomorphism $f:V(G) \to V(Q_{n/k})$ as follows: 
for each $i$ we write $X_i$ as $\delta(W_i)$; we put 
$f(v) = 1$ if $v \in W_i$ and $f(v)=0$ otherwise. Now 
$f=(f_1, \dots, f_n)$ is a homomorphism. If, on the other hand, 
we are given a homomorphism $f:V(G) \to V(Q_{n/k})$ then we 
can put $W_i = \{ v\in V(G): f_i(v)=1\}$ and observe that 
$(\delta(W_1), \dots, \delta(W_n))$ is a cut $n/k$-cover. 
\end{proof}

The above observation implies that an alternative definition of $x(G)$ is
\begin{equation} \label{eq:chiqalt}
  x(G) = \inf \Bigl\{ \frac nk \mid  G \hom Q_{n/k}  \Bigr\} \,.
\end{equation}
This is analogous to the definition of fractional chromatic number by means of 
homomorphisms to Kneser graphs (or of circular chromatic number by circulants). 
An immediate corollary is that $x(G)$ is a homomorphism invariant, 
that is if $G \hom H$ then $x(G) \le x(H)$. This will be strengthened
in Lemma~\ref{xTT}. This observation suggests a possible use of the cubical chromatic number 
to study the structure of graph homomorphisms---we can prove nonexistence of 
a homomorphism $G \hom H$ by showing that $x(G) > x(H)$. 

For a graph $H$ let $H^{{}^\ge k}$ denote the graph with vertices $V(H)$ and edges $uv$ 
for any $u,v\in V(H)$ with distance in~$H$ at least~$k$. Further let
$Q_n$ denote the $n$-dimensional cube. Then $Q_{n/k}=Q_n^{{}^\ge k}$. 
This corresponds to the definition of circular chromatic number, where
the target graph is~$C_n^{{}^\ge k}$. 
This observation inspires the term cubical chromatic number. However, as we will
see below (in Corollary~\ref{xbounds}), a rescaling of $x(G)$ is in order 
to make it behave like a version of chromatic number, thus the definition
of~$\chiq(G)$.

The original motivation for defining $x(G)$ was the study 
\cite{rs-thesis,NS-TT} of cut-continuous mappings (defined in \cite{DNR}).
Given graphs $G$, $H$ we call a mapping
$f: E(G) \to E(H)$ \emph{cut-continuous}, if for every cut
$U \subseteq E(H)$, the preimage $f^{-1}(U)$ is a cut in~$G$.
The following lemma is straightforward, but useful.

\begin {lemma}   \label{xTT}
Let $G$, $H$ be graphs. Then if there is a cut-continuous
mapping from $G$ to $H$ (in particular, if there
is a homomorphism $G \hom H$), then $x(G) \le x(H)$
and (equivalently) $\chiq(G) \le \chiq(H)$.
\end {lemma}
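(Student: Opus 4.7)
The plan is to prove the cut-continuous version directly, then observe that homomorphisms and the rescaled inequality follow immediately. The core idea is simply pulling back covers through $f$.

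First I would take any $n/k$-cover $(X_1, \ldots, X_n)$ of $H$ witnessing a value of $x(H)$ close to the infimum, and consider the tuple $(f^{-1}(X_1), \ldots, f^{-1}(X_n))$ in $G$. By the very definition of a cut-continuous mapping, each $f^{-1}(X_i)$ is a cut in $G$, so this is a candidate cover. For any edge $e \in E(G)$, the edge $f(e) \in E(H)$ belongs to $X_i$ for at least $k$ values of $i$ (since $(X_1, \ldots, X_n)$ is an $n/k$-cover of $H$), hence $e \in f^{-1}(X_i)$ for at least $k$ values of $i$. Thus the pulled-back tuple is an $n/k$-cover of $G$, giving $x(G) \le n/k$. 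Taking infimum over all $n/k$-covers of $H$ yields $x(G) \le x(H)$.

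Next I would handle the homomorphism case by noting that a homomorphism $\phi \colon G \to H$ induces an edge-map $f \colon E(G) \to E(H)$ defined by $f(uv) = \phi(u)\phi(v)$, and this $f$ is cut-continuous: for any $W \subseteq V(H)$ we have $f^{-1}(\delta_H(W)) = \delta_G(\phi^{-1}(W))$, which is a cut in $G$. So the homomorphism statement reduces to the cut-continuous statement already proved.

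Finally, for the equivalence with $\chiq(G) \le \chiq(H)$, I would use that $\chiq = 2/(2-x)$ is a strictly increasing function of $x$ on the interval $[0,2)$, together with the fact (referenced in the excerpt via Corollary~\ref{xbounds}) that $x(G), x(H) < 2$. Hence $x(G) \le x(H)$ if and only if $\chiq(G) \le \chiq(H)$.

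I do not expect any real obstacle here; the whole lemma is essentially a one-line pullback argument, and the only thing worth stating carefully is that cuts pull back to cuts (which is immediate from the definition of cut-continuity, or from the identity $\delta_G(\phi^{-1}(W)) = f^{-1}(\delta_H(W))$ in the homomorphism case).
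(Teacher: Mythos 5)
Your proof is correct and follows essentially the same pullback argument as the paper: preimages of cuts under a cut-continuous map are cuts, so an $n/k$-cover of $H$ pulls back to an $n/k$-cover of $G$. The only difference is that you spell out the details the paper leaves implicit (the identity $f^{-1}(\delta_H(W)) = \delta_G(\phi^{-1}(W))$ for the homomorphism case, and the monotonicity of $t \mapsto 2/(2-t)$ for the $\chiq$ equivalence), which the paper handles by citation and by noting one can instead use the alternative definition of $x$ via homomorphisms into $Q_{n/k}$.
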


\begin {proof}
It suffices to show that whenever $H$ has an $n/k$-cover, $G$ has it as
well. So let $f$ be some cut-continuous mapping from~$G$ to~$H$, let
$X_1$, \dots, $X_n$ be an $n/k$-cover and consider $X'_i$---a preimage of the
cut $X_i$ under~$f$. By definition, $X'_i$ is also a cut. If $e$~is an edge
of~$G$, $f(e)$ is an edge of~$H$, hence it is covered by at least~$k$ of the
cuts~$X_i$. Thus $e$ is covered by at least~$k$ of the cuts~$X'_i$. 
For the homomorphism part, one may observe that the mapping 
induced on edges by a homomorphism is cut-continuous \cite{DNR}, 
or just use the alternative definition in Equation~\eqref{eq:chiqalt}.
\end {proof}

As each graph $Q_{n/k}$ is a Cayley graph on~$\zet_2^n$, 
it follows~\cite{rs-thesis} that for every graph~$G$
the existence of a homomorphism from $G$ to $Q_{n/k}$ is 
equivalent to the existence of a cut-continuous mapping from $G$ to $Q_{n/k}$.
Consequently, we may as well use cut-continuous mapping
to~$Q_{n/k}$ in Equation~(\ref{eq:chiqalt}). This also provides an indirect
proof of Lemma~\ref{xTT}.

It is a standard exercise to show that $x(G)$ is 
the solution of the following linear
program ($\cal C$~denotes the family of all cuts in~$G$)
\begin{equation} \label{eq:LP1}
  \hbox {minimize $\displaystyle\sum_{X \in \cal C} w(X)$ subject to:
  for every edge $e$, 
  $\displaystyle\sum_{X, e \in X \in {\cal C}} w(X) \ge 1$.} 
\end{equation}

We conclude that we can replace $\inf$ by $\min$ in
the definition of $x(G)$---the infimum is always attained.
We can also consider the dual program
\begin{equation} \label{eq:LP2}
  \hbox {maximize $\displaystyle\sum_{e \in E(G)} y(e)$ subject to: 
  for every cut $X$, $\displaystyle\sum_{e, e \in X} y(e) \le 1$.} 
\end{equation}
This program is useful for computation of $x(G)$ for some~$G$. 
(Färnqvist, Jonsson, and Thapper~\cite{Svedi1} 
used a modification of this program.
There is an optimal solution $y^*$ of the above program, that respects
symmetries of $G$: if there is an automorphism of $G$ that maps 
edge $e$ to edge $f$, then $y^*(e) = y^*(f)$. This decreases the
size of the linear program for graphs with nontrivial automorphism group.)
Moreover, in the final section we use this dual program to discuss
yet another definition of $x(G)$ in terms of the bipartite subgraph
polytope.

There is another possibility to dualize the notion of fractional
cut covering, namely \emph{fractional cycle covering}. 
Bermond, Jackson and Jaeger~\cite{BJJ}
proved that every bridgeless graph has
a cycle $7/4$-cover (i.e., a collection of 7 cycles, that 
cover every edge at least 4 times), and Fan~\cite{Fan-mincc} 
proved that it has a $10/6$-cover. 
An equivalent formulation of the Berge-Fulkerson conjecture
claims that every cubic bridgeless graph has a
$6/4$-cover. On the other hand, Edmonds' characterization~\cite{Edmonds} of the matching
polytope implies that every cubic bridgeless graph has a cycle $3k/2k$-cover
(for some~$k$).
It is open, whether for some fixed $k$ every cubic bridgeless graph 
has a cycle $3k/2k$-cover. 

\section{Basic properties} \label{sec:basic} 

In this section we discuss how the cubical chromatic number relates to 
other graph parameters and prove analogs of some basic results about chromatic number. 

We let $\MC(G)$ denote the number of edges in the
largest cut in~$G$ and write 
$$
  b(G) = \MC(G)/|E(G)|
$$
for the \emph{bipartite density} of~$G$.
 
\begin {lemma}   \label{xb}
For any graph~$G$ it holds $x(G) \ge 1/b(G)$.
If $G$ is edge-transitive, then equality holds.
\end {lemma}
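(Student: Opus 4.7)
My plan is to split the statement into the lower bound (which holds in general) and the matching upper bound (which uses edge-transitivity), each proved by a short double-counting argument.

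For the lower bound, I will fix an arbitrary $n/k$-cover $(X_1,\dots,X_n)$ of $G$ and count incidences between edges and cuts in two ways. Since each $X_i$ is a cut, $|X_i|\le \MC(G)$, hence $\sum_{i=1}^n|X_i|\le n\,\MC(G)$. On the other hand, the covering condition gives $\sum_{i=1}^n|X_i|=\sum_{e\in E(G)}|\{i: e\in X_i\}|\ge k\,|E(G)|$. Combining the two inequalities yields $n/k\ge |E(G)|/\MC(G)=1/b(G)$, and taking the infimum over covers gives $x(G)\ge 1/b(G)$. (Equivalently, one can just exhibit $y(e)\equiv 1/\MC(G)$ as a feasible solution of the dual program \eqref{eq:LP2} of value $1/b(G)$.)

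For the matching upper bound in the edge-transitive case I will construct an explicit $n/k$-cover with $n/k=1/b(G)$. Let $X^\ast$ be any maximum cut, set $\Gamma=\Aut(G)$, and consider the multiset $(\sigma(X^\ast))_{\sigma\in\Gamma}$; each $\sigma(X^\ast)$ is again a cut of size $\MC(G)$. By edge-transitivity, the number $m_e=|\{\sigma\in\Gamma : e\in\sigma(X^\ast)\}|$ is independent of~$e$; call it $m$. Counting incidences gives $m\,|E(G)|=\sum_{\sigma\in\Gamma}|\sigma(X^\ast)|=|\Gamma|\cdot\MC(G)$, so $|\Gamma|/m=|E(G)|/\MC(G)=1/b(G)$. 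Hence the multiset of cuts is a $|\Gamma|/m$-cover, proving $x(G)\le 1/b(G)$.

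The argument is almost entirely a bookkeeping exercise; the only place where something nontrivial happens is the use of edge-transitivity to conclude that $m_e$ does not depend on~$e$, and that step is immediate from the definition (if $\tau(e)=e'$ then $\sigma\mapsto\sigma\tau^{-1}$ is a bijection between the two sets counted by $m_e$ and $m_{e'}$). So I do not anticipate any genuine obstacle; the main thing to be careful about is that $|\Gamma|$ and $m$ are integers, which is exactly what lets the averaged cover be realised as an honest $n/k$-cover rather than merely a fractional object.
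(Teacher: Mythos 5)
Your proof is correct and follows essentially the same strategy as the paper: the lower bound is the same double-count of edge–cut incidences, and the upper bound constructs a uniform cover from maximum cuts via edge-transitivity. The only cosmetic difference is that you take the $\Aut(G)$-orbit of a single maximum cut as a multiset, whereas the paper uses the set of all maximum cuts; both work for the same reason.
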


\begin {proof}
Suppose $x(G) = n/k$ and let $X_1$, \dots, $X_n$ be an $n/k$-cover.
Then $\sum_{i=1}^n |X_i| \le n\cdot b(G) |E(G)|$, on the other 
hand this sum is at least $k\cdot |E(G)|$, as every edge is counted
at least $k$~times. This proves the first part of the lemma. 
To prove the second part, let ${\cal X} = \{X_1, \dots, X_n\}$ be all cuts of
the maximal size (i.e., $|X_i| = b(G) |E(G)|$). From the edge-transitivity
follows that every edge is covered by the same number (say~$k$) of elements
of~$\cal X$. Now $k\cdot |E(G)| = \sum_{i=1}^n |X_i| = n \cdot b(G) |E(G)|$, 
which finishes the proof.
\end {proof}

\begin {corollary}   \label{xvalues}
Let $\Pet$ denote the Petersen graph. 
\begin{align*}
&x(K_{2n}) = x(K_{2n-1}) = 2 - 1/n & &\chiq(K_{2n}) = \chiq(K_{2n-1}) = 2n \\
&x(C_{2k+1}) = 1 + 1/(2k)          & &\chiq(C_{2k+1}) = 2 + 2/(2k-1) \\
&x(\Pet) = 5/4                     & &      \chiq(\Pet) = 8/3 
\end{align*}
\end {corollary}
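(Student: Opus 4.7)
The plan is to apply Lemma~\ref{xb} to each of the listed graphs, since they are all edge-transitive, so it suffices to determine the bipartite density $b(G) = \MC(G)/|E(G)|$ in each case. Once $x(G) = 1/b(G)$ is in hand, the values of $\chi_q(G)$ follow by plugging into $\chi_q(G) = 2/(2-x(G))$; this last step is purely arithmetic.

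First I would dispose of the complete graphs. For $K_{2n}$, the maximum cut is the complete bipartite graph $K_{n,n}$ with $n^2$ edges, and the total number of edges is $\binom{2n}{2} = n(2n-1)$, giving $b(K_{2n}) = n/(2n-1)$ and hence $x(K_{2n}) = 2-1/n$. For $K_{2n-1}$, the optimal balanced bipartition has parts of size $n$ and $n-1$, producing $n(n-1)$ edges out of $\binom{2n-1}{2} = (n-1)(2n-1)$; the $(n-1)$-factors cancel and one again obtains $b = n/(2n-1)$, so $x(K_{2n-1}) = 2-1/n$ as claimed. For the odd cycle $C_{2k+1}$, any cut must leave at least one edge uncovered (odd cycles are not bipartite), while deleting a single edge yields a bipartite spanning subgraph; hence $\MC(C_{2k+1}) = 2k$, $b = 2k/(2k+1)$, and $x(C_{2k+1}) = 1 + 1/(2k)$.

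The main obstacle is the Petersen graph: I need that $\MC(\Pet) = 12$. The Petersen graph has $15$ edges, so this will give $b(\Pet) = 4/5$ and $x(\Pet) = 5/4$. The upper bound $\MC(\Pet) \le 12$ follows from the fact that the Petersen graph contains $12$ copies of $C_5$ through each edge (or more directly from its odd girth and regularity combined with a parity/eigenvalue argument); alternatively, one may use that $\Pet$ is vertex- and edge-transitive and check that any cut $\delta(W)$ with $|W| \le 5$ has at most $12$ edges by direct case analysis on $|W| \in \{1,2,3,4,5\}$. For the lower bound, one exhibits a bipartition achieving $12$: for example, take $W$ to consist of two non-adjacent outer vertices together with the three inner vertices not adjacent to either, which cuts off exactly $12$ of the $15$ edges.

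Finally, the corresponding values of $\chi_q$ are obtained by substitution: $\chi_q(K_{2n}) = 2/(1/n) = 2n$, $\chi_q(C_{2k+1}) = 2/(1 - 1/(2k)) = 4k/(2k-1) = 2 + 2/(2k-1)$, and $\chi_q(\Pet) = 2/(3/4) = 8/3$. The only step that is not mechanical is the computation of $\MC(\Pet)$; everything else is a direct application of Lemma~\ref{xb} combined with elementary counting.
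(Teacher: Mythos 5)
Your approach is exactly the one the paper implicitly uses: Corollary~\ref{xvalues} follows Lemma~\ref{xb} with no stated proof, and the intended argument is precisely ``each of these graphs is edge-transitive, so $x(G)=1/b(G)$, and $b(G)$ can be computed directly.'' Your arithmetic for the complete graphs and odd cycles is correct, as is the final conversion to $\chiq$.

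However, your explicit witness for $\MC(\Pet)\ge 12$ is wrong. With the standard labelling (outer pentagon $v_0\dots v_4$, inner pentagram $u_0\dots u_4$ with $u_i\sim u_{i+2}$, spokes $v_i\sim u_i$), the set you describe is $W=\{v_0,v_2,u_1,u_3,u_4\}$, and $u_1,u_3,u_4$ span two internal edges ($u_1u_3$ and $u_1u_4$), so $\delta(W)$ has only $3+3+1+2+2=11$ edges, not $12$. A clean correct witness: take $W$ to be a maximum independent set of size $4$, e.g.\ $W=\{v_0,v_2,u_3,u_4\}$; since $W$ is independent and $\Pet$ is $3$-regular, $|\delta(W)|=4\cdot 3=12$. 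Also, your sentence ``the Petersen graph contains $12$ copies of $C_5$ through each edge'' is misstated: it has $12$ five-cycles \emph{in total}, with exactly $4$ through each edge. The counting argument you gesture at then reads: a cut of size $m$ leaves $15-m$ uncut edges, each covering $4$ five-cycles, and every five-cycle (being odd) needs at least one uncut edge, so $4(15-m)\ge 12$, i.e.\ $m\le 12$. (Alternatively, the eigenvalue bound of Lemma~\ref{bipspectral} applied to $\Pet$ gives $b(\Pet)\le\frac{10}{15}\cdot\frac{5}{4}=\frac{5}{6}$, hence $\MC(\Pet)\le 12.5$, so $\le 12$.) With these repairs the proof is complete and matches the paper's intent.
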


In the following result, $g_o(G)$ denotes the \emph{odd girth}, that is, the length of 
a shortest odd cycle in~$G$. 

\begin {corollary}   \label{xbounds}
For any graph~$G$, 
$$ 
2 + \frac 2{g_o(G) -2} \le \chiq(G) \le 2\ceil{\frac {\chi(G)}2} \,.
$$
Equivalently, 
$1 + \frac {1}{g_o(G)-1} \le   x(G)  \le 2 - \frac 1{\lceil \chi(G)/2 \rceil}$.

In particular, 
$x(G) \in [1,2)$ and $\chiq(G) \ge 2$. 
\end {corollary}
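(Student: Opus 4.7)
The plan is to establish the two bounds independently. The lower bound will come from a double-counting argument using a shortest odd cycle, exploiting the classical parity fact that cuts and cycles meet in an even number of edges. The upper bound will reduce to Corollary~\ref{xvalues} via the homomorphism invariance of $x$ (Lemma~\ref{xTT}). The ``in particular'' line will then require only the trivial observation $x(G)\ge 1$ together with finiteness of $\chi(G)$.

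For the lower bound, I would take an optimal $n/k$-cover $(X_1,\ldots,X_n)$ achieving $x(G)=n/k$, and pick a shortest odd cycle $C$ in $G$, of length $g:=g_o(G)$ (the case $g_o(G)=\infty$ being handled separately, see below). Since every cut intersects every cycle in an even number of edges, $|X_i\cap C|$ is even, hence $|X_i\cap C|\le g-1$ for each $i$. On the other hand, every edge of $C$ is covered at least $k$ times, so $\sum_i |X_i\cap C|\ge kg$. Combining gives $kg \le n(g-1)$, so $x(G)=n/k \ge g/(g-1) = 1+1/(g_o(G)-1)$, which rearranges to the claimed bound on $\chiq(G)$.

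For the upper bound, write $n=\lceil\chi(G)/2\rceil$, so $\chi(G)\le 2n$ and therefore $G\hom K_{2n}$. By Lemma~\ref{xTT} this gives $x(G)\le x(K_{2n})$, and Corollary~\ref{xvalues} yields $x(K_{2n})=2-1/n$. This delivers both the $x$- and the $\chiq$-versions of the upper bound at once.

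Finally, for the ``in particular'' assertion: the trivial estimate $\sum_i|X_i|\le n|E(G)|$ combined with $\sum_i|X_i|\ge k|E(G)|$ forces $n/k\ge 1$, so $x(G)\ge 1$ in every case (in particular when $G$ is bipartite and the odd-girth bound is vacuous), whence $\chiq(G)\ge 2/(2-1)=2$. Strictness $x(G)<2$ follows from the upper bound, since $G$ is finite and has an edge, so $\lceil\chi(G)/2\rceil$ is a positive integer. I do not foresee a genuine obstacle; the only point that needs care is remembering to treat the bipartite case (where $g_o(G)=\infty$) separately in the lower-bound step, which is why invoking the trivial $x(G)\ge 1$ at the end is important.
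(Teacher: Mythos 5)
Your proof is correct, and while the upper bound matches the paper's route exactly (factoring $G \hom K_{2n}$ with $n=\lceil\chi(G)/2\rceil$, then invoking Lemma~\ref{xTT} and Corollary~\ref{xvalues}), your lower bound takes a genuinely more elementary path. The paper argues $C_{g} \hom G$ and then chains Lemma~\ref{xTT} with the value $x(C_{g}) = 1 + 1/(g-1)$ from Corollary~\ref{xvalues} (itself derived from edge-transitivity and the bipartite density of an odd cycle). You instead double-count directly: fix an optimal $n/k$-cover, intersect each cut with a shortest odd cycle $C$, and use the classical parity fact that a cut meets a cycle in an even number of edges, forcing $|X_i\cap C|\le g-1$; then $kg\le n(g-1)$ gives the bound. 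This is essentially an unrolling of the paper's chain --- the parity fact is what hides inside $b(C_g)=(g-1)/g$ --- but your version is self-contained, avoids routing through $x(C_g)$, and makes the combinatorial content visible in one step. The paper's version is shorter given its existing lemmas and also yields the upper bound in the same breath. You also handle the bipartite case ($g_o(G)=\infty$) more explicitly than the paper, by falling back on the trivial bound $x(G)\ge 1$ via $\sum_i |X_i| \le n|E(G)|$ and $\ge k|E(G)|$; the paper leaves this implicit. Both proofs are valid.
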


\begin {proof}
Let $l=g_o(G)$, i.e., $C_l$ is the shortest odd cycle that is a subgraph
of~$G$. Put $n = \chi(G)$. Then there are homomorphisms 
$C_l \to G \to K_n$, so it remains to use 
Lemma~\ref{xTT} and Corollary~\ref{xvalues}.
\end {proof}

By combining Lemma~\ref{xTT} and Corollary~\ref{xvalues}
we get that there is no cut-continuous mapping from $K_{n+2}$ 
to $K_n$. As there is obviously a cut-continuous mapping 
(indeed, even a homomorphism) in the other direction, 
we conclude that the even cliques $K_{2n}$ form a strictly
ascending chain in the poset defined by cut-continuous mappings. 
This application was the original point in defining $x(G)$. The result
is not as straightforward as it appears (for example, there \emph{is}
a cut-continuous mapping $K_4 \to K_3$).

Next, we will study how good are the bounds of Corollary~\ref{xbounds}.
While they obviously are tight for $G$ equal to a complete graph, 
resp.\ odd cycle, they can be arbitrarily far off, as documented
by Corollary~\ref{smallx} and Theorem~\ref{xsparse}.
Before we get to that we need to look at $\chi_f(G)$---the fractional
chromatic number of~$G$. This may be defined by
$
  \chi_f(G) = \inf \{ n/k \mid  G \hom K(n,k) \} \,,
$
where $K(n,k)$ is the Kneser graph. 

\begin {lemma}   \label{Kneser}
Let $k$, $n$ be integers such that $0 < 2k \le n$. Then 
\begin{enumerate}
\item $b(K(n,k)) \ge 2k/n$.
\item $x(K(n,k)) \le n/(2k)$.
\end{enumerate}
Consequently, for any graph~$G$ we have
$x(G)    \le    \frac 12  \chi_f(G)$.
\end {lemma}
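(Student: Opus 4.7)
The plan is to exhibit $n$ natural cuts in $K(n,k)$ that together form an $n/(2k)$-cover, which yields both (2) directly and (1) by an averaging argument; (3) is then an immediate consequence of Lemma~\ref{xTT}.

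For each $i\in[n]$, let $W_i=\{S\in V(K(n,k)) : i\in S\}$ and set $X_i=\delta(W_i)$. The key calculation is to count, for an edge $\{S,T\}$ of $K(n,k)$ (so $S\cap T=\emptyset$), how many of the $X_i$ contain it. The edge lies in $X_i$ exactly when $i$ belongs to precisely one of $S,T$; since $S$ and $T$ are disjoint, this happens iff $i\in S\cup T$. Because $|S\cup T|=2k$, every edge is covered by exactly $2k$ of the cuts $X_1,\dots,X_n$. This immediately shows $(X_1,\dots,X_n)$ is an $n/(2k)$-cover and so $x(K(n,k))\le n/(2k)$, giving (2).

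For (1), double-counting yields $\sum_{i=1}^n|X_i|=2k\,|E(K(n,k))|$, so at least one $X_i$ has size $\ge (2k/n)|E(K(n,k))|$, i.e.\ $b(K(n,k))\ge 2k/n$. (Alternatively, $K(n,k)$ is edge-transitive, so all $|X_i|$ are equal and one can invoke Lemma~\ref{xb} directly.) Finally, for the last inequality, fix any $n,k$ with $G\hom K(n,k)$; by Lemma~\ref{xTT} and (2),
\[
x(G)\;\le\;x(K(n,k))\;\le\;\frac{n}{2k}\;=\;\frac12\cdot\frac{n}{k}.
\]
Taking the infimum over all such $n/k$ gives $x(G)\le\tfrac12\chi_f(G)$.

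There is no real obstacle here; the only thing to get right is the observation that on an edge $\{S,T\}$ of the Kneser graph the symmetric difference $S\triangle T$ equals $S\cup T$ (because of disjointness), so the number of coordinate cuts separating $S$ from $T$ is exactly $2k$ rather than something that must be estimated.
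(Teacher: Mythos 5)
Your proof is correct, and it takes a slightly different (and arguably more self-contained) route than the paper's. The paper proves part~1 first by computing the size of a single coordinate cut, namely $|\delta(U)|=\binom{n-1}{k-1}\binom{n-k}{k}$ for $U=\{S:1\in S\}$, and comparing it to the total edge count; it then deduces part~2 from part~1 via edge-transitivity and Lemma~\ref{xb}. You instead take all $n$ coordinate cuts $X_1,\dots,X_n$ at once, observe that an edge $\{S,T\}$ (with $S\cap T=\emptyset$) lies in $X_i$ exactly when $i\in S\cup T$, and hence is covered by exactly $2k$ of them. This gives an explicit $n/(2k)$-cover, so part~2 follows directly from the definition of $x$ without invoking Lemma~\ref{xb} or edge-transitivity, and part~1 drops out of a double count $\sum_i|X_i|=2k\,|E|$. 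What the paper's approach buys is brevity given that Lemma~\ref{xb} is already in place; what yours buys is independence from edge-transitivity for the $x$-bound and a cleaner conceptual picture (the coordinate cuts literally realize the cover rather than merely certifying a lower bound on $b$). Both rest on the same family of cuts, and your final step for the ``consequently'' part (infimum over $n/k$ with $G\hom K(n,k)$, via Lemma~\ref{xTT}) matches the paper's.
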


(Note that the bound is only useful if $k > n/4$.
Also note that the bound in part~1.\ is not optimal in general; the 
exact value of $b(K(n,k))$ is open~\cite{BollobasLeader}.) 

\begin {proof}
For the first part we let $U = \{ S \subseteq [n] \mid  1 \in S \}$ 
and observe that $\delta(U)$ contains $\binom {n-1}{k-1} \binom {n-k}{k}$~edges.
As Kneser graphs are edge-transitive, the second part follows by Lemma~\ref{xb}.
The rest follows by Lemma~\ref{xTT} and the definition of fractional
chromatic number.
\end {proof}

\begin {corollary}   \label{smallx}
For every $\eps > 0$ and every integer~$b$ there is a graph~$G$
such that 
$$
\chiq(G) < 2+\eps \quad 
    \hbox{and} \quad   \chi(G) > b \,.
$$
\end {corollary}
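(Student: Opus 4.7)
The plan is to realize the desired graph as a Kneser graph $K(n,k)$ with $n$ just slightly larger than $2k$, exploiting the well-known gap between $\chi$ and $\chi_f$ for Kneser graphs.

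First I recall two facts about Kneser graphs: by Lov\'asz's theorem $\chi(K(n,k)) = n - 2k + 2$, while the fractional chromatic number is $\chi_f(K(n,k)) = n/k$. So by Lemma~\ref{Kneser} we have the bound
\begin{equation*}
  x(K(n,k)) \le \frac{n}{2k},
\end{equation*}
which translates, via the definition $\chiq = 2/(2 - x)$, into
\begin{equation*}
  \chiq(K(n,k)) \le \frac{2}{2 - n/(2k)} = \frac{4k}{4k - n}.
\end{equation*}

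Next I would parameterize: set $n = 2k + m$. Then $\chi(K(n,k)) = m + 2$, and the bound above becomes $\chiq(K(n,k)) \le 4k/(2k - m)$, which tends to $2$ as $k \to \infty$ with $m$ fixed. Given $\eps > 0$ and $b$, I fix $m := \max(b - 1, 1)$ to guarantee $\chi(K(n,k)) = m + 2 > b$, and then choose $k$ large enough (specifically, any $k > (2+\eps)m/(2\eps)$ suffices) to force $4k/(2k - m) < 2 + \eps$. Setting $G := K(2k+m, k)$ yields the required graph.

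Since every step is a direct application of the already-established lemma and a standard computation, there is no real obstacle here; the only subtlety is remembering that the bound of Lemma~\ref{Kneser} is useful precisely in the regime $k > n/4$, which is exactly the regime $n = 2k + m$ with $m$ small relative to $k$ that we are using.
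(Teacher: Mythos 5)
Your proof is correct and follows essentially the same route as the paper: both use the Kneser graph $K(n,k)$ with $n$ slightly above $2k$, bound $x$ via Lemma~\ref{Kneser}, and invoke Lov\'asz's theorem for $\chi(K(n,k)) = n - 2k + 2$. The only difference is the parameterization — the paper sets $n = 2k+t$ with $k = 2^t$ and lets $t \to \infty$, so both conditions are handled at once, whereas you fix $m = n - 2k$ according to $b$ and then push $k$ to infinity to meet the $\eps$-bound; both are valid and equivalent in spirit.
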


\begin {proof}
Let $G = K(n,k)$, for $n=2k+t$, $k=t^2$ and~$t$~large enough. 
Then by Corollary~\ref{Kneser} we have $x(G) \le n/2k = 1 + t/(2t^2)$, 
thus (for $t$ large enough) $\chiq(G) \le 2+\eps$. 
On the other hand, it is known \cite{Lovasz-Kneser} that $\chi(G) = n-2k+2 = t + 2$. 
Cf.\ also Corollary~\ref{thm:chif} below, where a stronger result is proved using 
semidefinite approximation. 
\end {proof}

By Corollary~\ref{xbounds}, we can view Corollary~\ref{smallx}
as a strengthening of the well-known fact~\cite{Erdos-highgirth} that there
are graphs with no short odd cycle and with a large chromatic number.
It also shows that the converse of Lemma~\ref{xTT} is far from being true: 
just take $G$~from the Corollary~\ref{smallx} and let $H = K_{b/2}$ (for $b$ large). 
Then $\chiq(G)$ is close to 2 and $\chiq(H)$ is at least $b/2$, still by an application of
Proposition~6.7 of~\cite{DNR} there is no cut-continuous mapping from $G$ to $H$.

It is interesting to find how various graph properties affect $\chiq(G)$.
From the values in Corollary~\ref{xvalues} we might think that $\chiq(G)$ is 
always larger than the fractional chromatic number~$\chi_f(G)$. However, 
this is very far from the truth, as shown in Corollary~\ref{thm:chif} below. 
We saw already that small $\chi(G)$ makes $\chiq(G)$ small
(Corollary~\ref{xbounds}), while
large $\chi(G)$ does not force $\chiq$ to be large (Corollary~\ref{smallx}). 
Also small $g_o(G)$ makes $\chiq(G)$ large (Corollary~\ref{xbounds} again).  
Complementing Corollary~\ref{smallx} we show 
that large $g_o(G)$ does not make $\chiq(G)$ small 
(but cf. Question~\ref{xsparsecubic}). 

\begin {theorem}   \label{xsparse}
For any integers $k$, $l$ there is a graph~$G$ such that
$\chiq(G) > k$ and $G$~contains no circuit of length at most~$l$.
\end {theorem}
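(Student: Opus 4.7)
The plan is to reduce the theorem to a statement about bipartite density and then apply an eigenvalue bound for $\MC$. Since $\chi_q(G) > k$ iff $x(G) > 2 - 2/k$, and Lemma~\ref{xb} gives $x(G) \ge 1/b(G)$, it suffices to produce, for every $l$ and every $\delta \in (0, 1/(2k-2))$, a graph $G$ of girth exceeding $l$ with $b(G) < \tfrac 12 + \delta$. Indeed, then $b(G) < k/(2k-2)$, so $x(G) > 2-2/k$ and $\chi_q(G) > k$.

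For the bipartite density I would invoke the spectral bound of Delorme--Poljak and Mohar--Poljak: for a $d$-regular graph $G$ on $n$ vertices with smallest adjacency eigenvalue $\lambda_{\min}$,
$$ \MC(G) \;\le\; \frac{|E(G)|}{2} + \frac{n\,|\lambda_{\min}|}{4}, $$
so that $b(G) \le \tfrac 12 + |\lambda_{\min}|/(2d)$. If $G$ is approximately Ramanujan in the sense that $|\lambda_{\min}| \le 2\sqrt{d-1}$, this gives $b(G) - \tfrac 12 \le \sqrt{d-1}/d \le 1/\sqrt d$, which is smaller than the prescribed $\delta$ once $d > 4(k-1)^2$.

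It therefore remains to exhibit, for every $l$ and every sufficiently large $d$, a $d$-regular graph of girth greater than $l$ that satisfies (approximately) the bound $|\lambda_{\min}| \le 2\sqrt{d-1}$. Two routes are available. One is to use the explicit Ramanujan graphs of Lubotzky--Phillips--Sarnak and Margulis: for $d = q+1$ with $q$ a prime power, these give infinite families with $|\lambda_{\min}| \le 2\sqrt{d-1}$ and girth tending to infinity. The other is probabilistic: a random $d$-regular graph on $n$ vertices has $|\lambda_{\min}| \le 2\sqrt{d-1} + o(1)$ with high probability (Friedman's theorem), while the expected number of cycles of length at most $l$ grows only polylogarithmically in $n$; deleting one edge from each short cycle leaves a graph of girth $> l$, and since one removes $o(|E|)$ edges this perturbs both $|E(G)|$ and $\MC(G)$, hence $b(G)$, by only $o(1)$.

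The main obstacle is thus the invocation of a family of near-Ramanujan graphs of arbitrarily high girth; once such a family is at hand, combining the spectral bound with Lemma~\ref{xb} and a suitable choice of $d$ is routine arithmetic.
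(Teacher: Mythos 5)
Your argument is correct but takes a genuinely different route from the paper's. Both start from the same reduction: $\chiq(G) > k$ is equivalent to $x(G) > 2 - 2/k$, and by Lemma~\ref{xb} it suffices to find a high-girth graph with $b(G) < \tfrac12 + \tfrac1{2k-2}$. The paper then runs an Erd\H os-style argument in the $G(n,p)$ model with $p = n^{\alpha-1}$ for small $\alpha < 1/l$: Lemma~\ref{brandom} (proved by Chernoff bounds and a union bound over all cuts) shows $b(G(n,p)) \le \tfrac12 + o(1)$ a.a.s., the expected number of short circuits is $o(|E|)$, one edge is deleted from each, and Lemma~\ref{xb} is applied. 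You instead obtain the density bound spectrally---via the Mohar--Poljak/Delorme--Poljak inequality $b(G) \le \tfrac12 + |\lambda_{\min}|/(2d)$---and import high girth from an explicit or random (near-)Ramanujan family of sufficiently large degree. The paper's route is self-contained and elementary (it needs nothing beyond Chernoff, and Lemma~\ref{brandom} is proved in-house), whereas yours rests on heavy external theorems (Friedman, or the number theory behind LPS/Margulis) but yields explicit witnesses in the LPS case. Two cautions for your version: if you use LPS graphs you must take the \emph{non-bipartite} family (Legendre symbol $\bigl(\tfrac{p}{q}\bigr)=1$); the bipartite LPS graphs have $\lambda_{\min} = -d$ and $b(G)=1$, so the spectral bound degenerates. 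And after deleting edges to kill short cycles the graph is no longer regular, so the $d$-regular spectral inequality cannot be reapplied directly---the $o(1)$-perturbation argument you sketch for $\MC$ and $|E|$ is the correct fix (and is also what the paper does in its own deletion step). It is worth noting the paper does invoke the same eigenvalue bound for $\MC$ (Lemmas~\ref{bipspectral} and~\ref{laplacian}), but in the exact computation of Theorem~\ref{thm:chiqinvar}, not here.
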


\begin {proof}
We modify the famous Erd\H{o}s' proof of existence of high-girth
graphs of high chromatic number. 

Let $p=n^{\alpha -1}$ (where $\alpha \in (0,1/l)$) and consider the random graph $G(n,p)$.
The expected number of circuits of length at most~$l$ is $O((pn)^l) = o(n)$, 
therefore by Markov inequality with probability $1-o(1)$ 
the graph $G(n,p)$~contains at most $n$ circuits of length at most $l$. 

Using Lemma~\ref{brandom}, and in particular its Claim~1, where 
we put $\delta=n^{-\alpha/3}$ we get that a.a.s. 
$b(G(n,p)) \le \tfrac12 (1+O(n^{-\alpha/3}))$ and 
$|E(G(n,p))| > n^{1+\alpha}/3$. 

We take a graph~$G'$ satisfying all these three requirements. Then we 
delete one edge from each of the at most $n$ short circuits 
and let $G$ be the resulting graph. 


Clearly $G$ contains no short cycles. To show $\chi_q(G)$ is large it is enough 
to show that $x(G)$ can be arbitrary close to 2, or (using Lemma~\ref{xb}) 
to show that $b(G)$ can be arbitrary close to 1/2. 

As $|E(G')| = \Omega( n^{1+\alpha})$, and as we delete at most~$n$ edges 
of~$G'$ to get~$G$, we have $|E(G)| \ge |E(G')| (1-o(1))$. 
Obviously, MAXCUT in~$G$ cannot be larger than in $G'$, thus 
$b(G) \le b(G') (1+o(1)) = \tfrac 12 (1+o(1))$, which finishes the proof. 
\end {proof}

In the previous result it was crucial that the graphs had large degrees. 
For graphs of small degree the situation differs: 

\begin {question}   \label{xsparsecubic}
Let $G$ be a cubic graph with no cycle of length $\le c$. 
How large can $\chiq(G)$ (resp. $x(G)$) be?
\end {question}

For $c=3$, it follows from Brooks' theorem that $x(G) \le x(K_3) = 3/2$
($\chiq(G) \le 4$). 
For $c=17$, it is known~\cite{wpp} that $G$ has a cut-continuous mapping 
to $C_5$, hence $x(G) \le x(C_5) = 5/4$ ($\chiq(G) \le 8/3$).
Kardo\v{s}, Kr\'al' and Volec~\cite{KKV} prove that if the girth of a cubic graph~$G$ is large enough, then 
$x(G) \le 1.127752$ ($\chiq(G) \le 2.2929258651$). 
On the other hand, there is $\varepsilon > 0$ such that cubic graphs $G$ of arbitrary
high girth exist with $b(G) < 1-\eps$ (an unpublished result
of McKay, see also~\cite{rs-thesis}), hence with $x(G) > 1+\eps$ and so $\chiq(G) > 2+2\eps$.

\def\cprod{\mathop{\Box}} 
We conclude this section by a simple lemma that shows that $\chiq$ and $x$ enjoy some of the properties
of other chromatic numbers. (Here $G_1 \cprod G_2$ denotes the Cartesian product of graphs, 
$G_1 \times G_2$ the categorical one (also called tensor product); for more information 
about graph products we refer to Imrich and Klav\v{z}ar~\cite{products}.) 

\begin {lemma}   \label{operations}
\begin {enumerate}
\item $x(G) = \max \{ x(G')  \mid  
    \hbox {$G'$ is a component of~$G$} \}$
\item $x(G) = \max \{ x(G')  \mid  
    \hbox {$G'$ is a 2-connected block of~$G$} \}$ for a connected graph~$G$.
\item $x(G_1 \cprod G_2) = \max \{ x(G_1), x(G_2) \}$
\item $x(G_1 \times{}G_2) \le \min \{ x(G_1), x(G_2) \}$
\end {enumerate}
The same formulas are true for $\chiq$ in place of $x$.
\end {lemma}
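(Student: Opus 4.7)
The plan is to treat the four items in turn; the $\chi_q$ statements then follow since $\chi_q = 2/(2-x)$ is monotone increasing in $x$ on the relevant interval $[1,2)$.

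For parts~1 and~2, the underlying structural fact is that cuts of $G$ correspond precisely to independent choices of cuts in each component (respectively, in each 2-connected block: any two blocks share at most one vertex, so the ``sides'' of different blocks can be chosen independently and then made consistent along cut-vertices by flipping). Consequently, any $n/k$-cover of $G$ restricts on every piece $G'$ to an $n/k$-cover, yielding $x(G') \le x(G)$; conversely, after duplicating cuts to bring the individual covers to a common $n$ and $k$, coordinate-wise unions combine $n/k$-covers of the pieces into an $n/k$-cover of~$G$.

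Part~4 is immediate from Lemma~\ref{xTT}: by the very definition of the categorical product, both coordinate projections $G_1 \times G_2 \to G_i$ are graph homomorphisms, so $x(G_1 \times G_2) \le \min\{x(G_1), x(G_2)\}$. For the lower bound in part~3, fixing one coordinate embeds each $G_i$ homomorphically into $G_1 \Box G_2$, giving $\max\{x(G_1), x(G_2)\} \le x(G_1 \Box G_2)$.

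The matching upper bound in part~3 is the one nontrivial step, and I would use the alternative definition~\eqref{eq:chiqalt}. The key observation is that the componentwise XOR map
\[
    Q_{n/k} \,\Box\, Q_{n/k} \;\longrightarrow\; Q_{n/k}, \qquad (x,y) \longmapsto x \oplus y,
\]
is a graph homomorphism: any edge of the cartesian product modifies exactly one of its two arguments, so the images differ by a vector of Hamming weight $\ge k$. If one can produce homomorphisms $G_1, G_2 \to Q_{n/k}$ for the \emph{same} $n/k = \max\{x(G_1), x(G_2)\}$, composing $G_1 \Box G_2 \to Q_{n/k} \Box Q_{n/k} \to Q_{n/k}$ closes the argument. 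The only real obstacle is this alignment step: writing $x(G_i) = a_i/b_i$ (values attained, by the LP discussion) with $a_1/b_1 \ge a_2/b_2$, duplicating every cut of the $G_1$-cover $a_2$ times and every cut of the $G_2$-cover $a_1$ times yields covers of matching length $a_1 a_2$ with $G_1$ covered $\ge a_2 b_1$ times and $G_2$ covered $\ge a_1 b_2 \ge a_2 b_1$ times, so both realize the common ratio $a_1 a_2 / a_2 b_1 = a_1/b_1 = \max\{x(G_1),x(G_2)\}$, as required.
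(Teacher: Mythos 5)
Your proof is correct, and for three of the four items it follows the paper's route: parts~1 and~2 are handled by combining covers of pieces that share at most one vertex (you normalize the two covers to a common length before taking coordinate-wise unions, whereas the paper takes the $mn$ pairwise unions $X'_i \cup X''_j$ --- same idea, different bookkeeping), and part~4 uses the two projections of the categorical product exactly as intended (incidentally, you state the direction of these homomorphisms correctly, $G_1 \times G_2 \to G_i$; the paper's text writes them the wrong way round, which is a slip). Part~3 is where you genuinely diverge. The paper's upper bound is a one-liner: $G_1 \Box G_2$ and the disjoint union $G_1 \dot\cup G_2$ admit cut-continuous mappings in both directions, so Lemma~\ref{xTT} reduces part~3 to part~1. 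You instead work directly with the homomorphism characterization~\eqref{eq:chiqalt}: you observe that coordinatewise XOR gives a homomorphism $Q_{n/k} \Box Q_{n/k} \to Q_{n/k}$, align the two covers to a common ratio $a_1 a_2/(a_2 b_1) = \max\{x(G_1),x(G_2)\}$ by duplication, and compose $G_1\Box G_2 \to Q_{n/k}\Box Q_{n/k} \to Q_{n/k}$. This is a correct and fully self-contained argument; its advantage is that it does not rely on the (cited, not reproved) cut-continuous equivalence of $\Box$ with disjoint union, at the price of the explicit alignment step. In effect your XOR homomorphism is the concrete witness behind the paper's appeal to that equivalence.
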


\begin {proof}
We will prove that if $G'$, $G''$ are graphs that share at most one vertex, 
then $x(G' \cup G'') = \max \{ x (G'), x (G'') \}$. 
Clearly, this proves~1 and~2.
Let $x(G')=n/k$, and $x(G'')=m/l$ (by discussion after Equation~\eqref{eq:LP1} 
the infimum is attained) and suppose $X'_1$, \dots, $X'_n$ is an $n/k$-cover of~$G'$, 
while $X''_1$, \dots, $X''_m$ is an $m/l$-cover of $G''$. 
Consider the collection of $mn$ cuts 
$\{ X'_i \cup X''_j \}$ (these are cuts, indeed, as $G'$ and $G''$ share at
most one vertex). An edge of $G'$ is covered at least $mk$ times, an edge
of~$G''$ at least $nl$ times. Hence 
$x(G) \le \frac {mn}{\min \{ mk, nl \} } = 
  \max \{ \frac nk, \frac ml \} = \max \{ x(G'), x(G'')\}$. 
On the other hand, both $G'$ and $G''$ are subgraphs of~$G$, hence by
Lemma~\ref{xTT} the other inequality follows.

Part~3 follows from Lemma~\ref{xTT}, as between $G_1 \cprod G_2$ and $G_1 \cup G_2$
exists a cut-continuous mapping in both directions.

Part~4 follows from Lemma~\ref{xTT} as there are homomorphisms 
(and therefore $TT$~mappings) $G_1 \times{}G_2 \to G_i$ (for $i=1,2$). 

As $\chiq = 2/(2-x)$ (which is an increasing function for the values that
$x$ can attain), the results for $\chiq$ follow immediately.
\end {proof}

\section{Cubical chromatic number of random graphs}

In this section we consider the value of cubical chromatic number of random graphs. 
After a short technical lemma (that is also used in the proof of Theorem~\ref{xsparse}) 
we bound $\chi_q$ of a random graph $G(n,1/2)$ 
using a simple self-contained proof. We complement this by a result that provides 
the correct order of magnitude using results from Section~\ref{sec:SDP}. 

\begin {lemma}   \label{brandom}
Let $p$, $\delta$ be functions of~$n$ such that
$p, \delta \in [0,1]$ and $\delta^2 p \ge 7\log n/n$.
Then 
$b(G(n,p)) \le \frac 12 \left(1 + O(1/n) + O(\delta)\right)$ a.a.s.
In particular, we have 
$$
  b(G(n,p)) \le \frac 12 + O\left(\sqrt{\frac{\log n}{pn}}\right)
    \qquad \hbox{a.a.s.} 
$$
\end {lemma}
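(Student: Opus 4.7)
The plan is to estimate the numerator $\MC(G(n,p))$ from above and the denominator $|E(G(n,p))|$ from below, each a.a.s., and then bound the ratio. Both estimates come from standard Chernoff-type concentration for binomial variables.

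First I would establish the sub-claim (``Claim~1'' promised inside this proof for use in Theorem~\ref{xsparse}): the edge count $|E(G(n,p))|\sim\mathrm{Bin}(\binom{n}{2},p)$ has mean $\binom{n}{2}p$. The hypothesis $\delta^2 p\ge 7\log n/n$ together with $\delta\le 1$ forces $pn\ge 7\log n$, so this mean is $\Omega(n\log n)\to\infty$. A one-sided Chernoff bound then yields $|E(G(n,p))|\ge(1-o(1))\binom{n}{2}p$ a.a.s.; for the application in Theorem~\ref{xsparse} this specialises to $\Omega(n^{1+\alpha})$ when $p=n^{\alpha-1}$.

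Next I would bound $\MC(G(n,p))$ from above. For a fixed $W\subseteq V$ with $|W|=s$, the crossing-edge count $|\delta(W)|\sim\mathrm{Bin}(s(n-s),p)$ has mean $\mu_s=s(n-s)p\le M$, where $M:=n^2p/4$. The goal is that no $W$ satisfies $|\delta(W)|\ge(1+\delta)M$. Bernstein's form of Chernoff gives, for $t\ge 0$,
$$
\Pr\bigl[|\delta(W)|\ge\mu_s+t\bigr]\le\exp\!\Bigl(-\frac{t^2}{2\mu_s+2t/3}\Bigr),
$$
which I apply with $t=(1+\delta)M-\mu_s\ge\delta M$. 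A brief two-regime analysis handles all $s$ uniformly: when $\mu_s\ge M/2$, the denominator is $O(M)$, so the exponent is $-\Omega(\delta^2 M)$; when $\mu_s<M/2$, then $t=\Theta(M)$ and the exponent is $-\Omega(M)$, which is at least $-\Omega(\delta^2 M)$ since $\delta\le 1$. In either regime the tail is $\exp(-\Omega(\delta^2 n^2 p))$. The hypothesis rewrites as $\delta^2 n^2 p\ge 7n\log n$, so a union bound over the $2^{n-1}$ cuts costs a factor $2^n\exp(-\Omega(n\log n))=o(1)$, and thus $\MC(G(n,p))\le(1+\delta)M$ a.a.s.

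Combining,
$$
b(G(n,p))\le\frac{(1+\delta)n^2p/4}{(1-o(1))\binom{n}{2}p}=\frac12\bigl(1+O(\delta)+O(1/n)\bigr)
$$
a.a.s., which is the first conclusion. The ``in particular'' clause follows by taking $\delta=\sqrt{7\log n/(pn)}$, which saturates the hypothesis. The only subtle step is ensuring the tail bound is uniform over all sizes $s$; the two-regime split above makes this immediate, so I do not expect any deeper obstacle beyond honest bookkeeping of constants.
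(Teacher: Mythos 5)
Your proposal is correct and follows the same overall strategy as the paper: Chernoff on the edge count for the lower bound on $|E|$, a concentration bound plus union bound over cuts for the upper bound on $\MC$, then combine and choose $\delta$ to saturate the hypothesis for the ``in particular'' clause. The only real difference is in step (b)+(c): the paper applies multiplicative Chernoff to each $X_A$ around its own mean $\Exp X_A = pa(n-a)$, obtaining a size-dependent tail $\exp(-\delta^2 p a n/6)$, and compensates for the weakness at small $a$ with a size-stratified union bound $\sum_{a\le n/2}\binom{n}{a}(\cdots)\le 2[(1+e^{-\delta^2pn/6})^n-1]$. You instead fix the deviation level $(1+\delta)n^2p/4$ uniformly, apply Bernstein, and show by a two-regime split that the tail is $\exp(-\Omega(\delta^2 n^2 p))$ \emph{uniformly over all cut sizes}, which then tolerates the crude $2^{n-1}$ union bound. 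Both work under the hypothesis $\delta^2 p\ge 7\log n/n$; the paper's is slightly tighter per size, yours is slightly simpler to union-bound. One small point worth tightening: rather than the vague ``$(1-o(1))\binom{n}{2}p$'' for the edge count, it is cleaner to take the Chernoff deviation equal to $\delta$ (as the paper does), giving $|E|\ge(1-\delta)\binom{n}{2}p$ a.a.s.\ and making the final error exactly $O(\delta)+O(1/n)$ without an extra unquantified $o(1)$.
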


\begin {proof}
We will prove that almost all graphs have ``many edges but
no huge cut''.

\paragraph{Claim 1.} $|E(G(n,p))| > (1-\delta) p \binom{n}{2}$ a.a.s.

To prove this we use Chernoff inequality (as stated in Corollary~2.3
of~\cite{JLR-book}) for random variable $X = |E(G(n,p))|$. It claims
$\Pr[ |X - \Exp x| \ge \delta \Exp X] \le 2 e^{-\frac {\delta^2}{3} \Exp X}$ 
for $\delta \le 3/2$ and as $\Exp X = p \binom{n}{2}$, Claim~1 follows.

\paragraph{Claim 2.} $\MC(G(n,p)) < (1+\delta) p \frac {n^2}{4}$ a.a.s.

For a set $A \subseteq V(G(n,p))$ we let $X_A$~be the random variable
that counts the edges leaving~$A$, and put $a = |A| \le n/2$.
By Chernoff inequality for $X_A$ we easily get 
$$
  \Pr[X_A \ge (1+\delta) pn^2/4]
    \le 2 e^{-\frac {\delta^2}{3} p a (n-a)} 
    \le 2 e^{-\frac {\delta^2 p a n}{6}} \,. 
$$
It remains to estimate the total probability of a large cut:
$$
  \Pr[(\exists A) X_A \ge (1+\delta) pn^2/4]
     \le \sum_{a=1}^{n/2} \binom {n}{a} 2 e^{-\frac {\delta^2 p a n}{6}}  
     \le 2 \bigl((1+e^{-\frac {\delta^2 p n}{6}})^n - 1\bigr) \,.
$$
For $\delta^2 p \ge 7\log n/n$ the last expression tends to zero, 
which finishes the proof of Claim~2.
The rest of the proof of the lemma is a simple calculation.
\end {proof}


\begin {theorem}   \label{xrandom}
$$
 \Omega\left( \sqrt{n/\log n}\right) 
   \le \chiq(G(n,1/2)) 
   \le O\left({n}/{\log n} \right) \qquad \hbox{a.a.s.}
$$
\end {theorem}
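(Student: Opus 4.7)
The plan is to get both bounds essentially for free by combining results already in hand: Lemma~\ref{brandom} together with Lemma~\ref{xb} for the lower bound, and the classical estimate for $\chi(G(n,1/2))$ together with Corollary~\ref{xbounds} for the upper bound. Since both directions are fairly direct, there is no single ``hard step'' --- the main issue is simply translating the bipartite-density estimate into a bound on $\chi_q$ through the rescaling $\chi_q=2/(2-x)$, which is where the square root appears.

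For the lower bound I would specialize Lemma~\ref{brandom} to $p=1/2$, taking $\delta$ of order $\sqrt{\log n/n}$ so that the hypothesis $\delta^2 p \ge 7\log n/n$ is satisfied. This yields
\[
  b(G(n,1/2)) \le \tfrac12 + O\!\left(\sqrt{\tfrac{\log n}{n}}\right) \qquad \text{a.a.s.}
\]
Applying Lemma~\ref{xb} gives $x(G(n,1/2)) \ge 1/b(G(n,1/2)) \ge 2 - O(\sqrt{\log n/n})$ a.a.s. Since $\chi_q(G) = 2/(2-x(G))$, inverting this produces
\[
  \chi_q(G(n,1/2)) \ge \Omega\!\left(\sqrt{n/\log n}\right) \qquad \text{a.a.s.},
\]
which is the desired lower bound.

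For the upper bound I would invoke the classical theorem (Bollob\'as) that $\chi(G(n,1/2)) = (1+o(1))\,n/(2\log_2 n)$ a.a.s., in particular $\chi(G(n,1/2)) = O(n/\log n)$. Plugging into Corollary~\ref{xbounds} gives
\[
  \chi_q(G(n,1/2)) \le 2\,\bigl\lceil \chi(G(n,1/2))/2\bigr\rceil = O(n/\log n) \qquad \text{a.a.s.},
\]
completing the proof. The gap between the two bounds (one square root apart) simply reflects the gap in Corollary~\ref{xbounds} between the odd-girth lower bound and the chromatic upper bound on $\chi_q$; closing it would require a genuinely new argument and is not attempted here.
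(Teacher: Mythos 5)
Your argument is correct and follows precisely the route the paper itself takes: Lemma~\ref{brandom} combined with Lemma~\ref{xb} and the rescaling $\chi_q=2/(2-x)$ for the lower bound, and Corollary~\ref{xbounds} with the standard estimate on $\chi(G(n,1/2))$ for the upper bound. You simply spell out the computation that the paper leaves implicit, and you do so accurately.
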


\begin {proof}
The lower bound follows by Lemma~\ref{brandom},
the upper one by an application of Corollary~\ref{xbounds} and the
well-known fact that $\chi(G(n,1/2)) = O(n/\log n)$.
\end {proof}

The above theorem is included because the proof is short and self-contained. 
In the next theorem we give asymptotically tight estimate of~$\chi_q(G(n,p))$. 
In that, however, we rely on known estimates of $\vartheta(G(n,p))$ 
and the relation between $\vartheta$ and $\chi_q$ that we derive 
in Section~\ref{sec:SDP} below. 

\begin{theorem} \label{tightxrandom}
$\chiq(G(n,p)) = \Theta(\sqrt{pn})$ \qquad a.a.s.
\end{theorem}

\begin {proof}
The result follows directly using 
Theorem~\ref{t:chivrandom} 
and Theorem~\ref{t:chivchiq} below. 
\end {proof}

\section{Measuring the scale}   \label{sec:chiqQ}

In this section we will discuss the `invariance property' of cubical
chromatic number. In analogy with $\chi (K_n) = n$, $\chi_c(C_n^{{}^\ge k}) = n/k$, 
$\chi_f(K(n,k)) = n/k$, 
and `dimension of product of $n$~complete graphs is~$n$' we would
like to prove that $x(Q_{n/k}) = n/k$. The following lemma shows that the
situation is not so simple for $x$.

\begin {lemma}   \label{chiqinvareasy}
Let $1 \le k \le n$ be integers. Then we have $x(Q_{n/k}) \le \frac nk$.
If $k$ is odd, then $x(Q_{n/k}) \le \frac {n+1}{k+1}$.
\end {lemma}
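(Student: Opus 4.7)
The plan is to exhibit explicit cut covers of $Q_{n/k}$.

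For the first inequality $x(Q_{n/k}) \le n/k$, I would use the obvious $n$ coordinate cuts: for each $i \in [n]$, set $W_i = \{v \in \{0,1\}^n : v_i = 0\}$ and let $X_i = \delta(W_i)$. For any edge $uv$ of $Q_{n/k}$, which by definition satisfies $d(u,v) \ge k$, the number of indices $i$ with $uv \in X_i$ equals the number of coordinates in which $u$ and $v$ differ, i.e., $d(u,v)$. Hence every edge is covered by at least $k$ of these cuts, giving an $n/k$-cover and thus $x(Q_{n/k}) \le n/k$.

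For the second inequality, assuming $k$ is odd, the key idea is to augment the above family by the parity cut. Let $W = \{v : \sum_i v_i \equiv 0 \pmod 2\}$ and $X_{n+1} = \delta(W)$; an edge $uv$ belongs to $X_{n+1}$ exactly when $d(u,v)$ is odd. Consider an arbitrary edge $uv$ of $Q_{n/k}$ and set $d = d(u,v) \ge k$. If $d$ is odd, then $uv$ lies in the $d \ge k$ coordinate cuts plus the parity cut, contributing $d+1 \ge k+1$ covers. If $d$ is even, then since $k$ is odd we cannot have $d = k$, so $d \ge k+1$ already from the coordinate cuts alone, and the parity cut contributes $0$. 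In both cases each edge is covered at least $k+1$ times, so $(X_1, \dots, X_{n+1})$ is an $(n+1)/(k+1)$-cover.

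There is no real obstacle here; the only subtlety is recognizing that the parity of the Hamming distance across an edge is the right auxiliary invariant, and that the parity of $k$ is exactly what makes the rounding $d \ge k \Rightarrow d \ge k+1$ (for $d$ of the opposite parity) available for free. Both cover constructions are entirely self-contained, so Lemma~\ref{xTT} is not needed; alternatively, one could derive the second bound from the first by exhibiting a homomorphism $Q_{n/k} \to Q_{(n+1)/(k+1)}$ via $v \mapsto (v, \sum_i v_i \bmod 2)$ and invoking Lemma~\ref{xTT}, but the direct cover is shorter.
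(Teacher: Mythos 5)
Your proposal is correct and essentially matches the paper's proof: the paper uses the identity homomorphism $Q_{n/k}\to Q_{n/k}$ for the first bound and the parity-augmented homomorphism $(x_1,\dots,x_n)\mapsto(x_1,\dots,x_n,\sum_i x_i \bmod 2)$ for the second, invoking Equation~\eqref{eq:chiqalt}. Your explicit coordinate cuts and parity cut are exactly the cut-covers corresponding to these homomorphisms, so this is the same argument unpacked into cut language, as you yourself note at the end.
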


\begin {proof}
For the first part, it suffices to consider the identical homomorphism
$Q_{n/k} \hom Q_{n/k}$. For the second part, mapping
$V(Q_{n/k}) \to  V(Q_{\frac{n+1}{k+1}})$ given by
$(x_1, \dots, x_n) \mapsto (x_1, \dots, x_n, x_1 + \cdots + x_n \bmod 2)$
is a homomorphism whenever $k$ is odd.
\end {proof}

Another complication is that by Corollary~\ref{xbounds} we have $x(G) < 2$
for any graph~$G$. However, with this exception, the bounds in 
Lemma~\ref{chiqinvareasy} are optimal: 

\begin {theorem}   \label{thm:chiqinvar}
Let $k$, $n$ be integers such that $k \le n \le 2k$.
Then 
\begin{enumerate}
\item if $k$ is even and $n < 2k$ then $x(Q_{n/k}) = \frac nk$; and 
\item if $k$ is odd then $x(Q_{n/k}) = \frac {n+1}{k+1}$. 
\end{enumerate}
\end {theorem}

\begin{corollary}
There is no homomorphism $Q_{n/k} \to Q_{n'/k'}$ 
if $1 \le n'/k' < n/k \le 2$, and $k$ is even. 
There is no homomorphism $Q_{tn/tk} \to Q_{n,k}$ 
if $t > 1$ is an integer, $tk$ is odd and $1 < n/k < 2$. 
\end{corollary}

\begin{proof}
The first part follows directly from Theorem~\ref{thm:chiqinvar} and Lemma~\ref{xTT}
(note that $(n'+1)/(k'+1) \le n'/k'$, so we do not care about parity of~$k'$). 
For the second part, observe first, that $Q_{n/k}$ is a subgraph of~$Q_{tn/tk}$ 
for a positive integer~$t$. It is known~\cite{UVC} that $Q_{tn/tk}$ is a core, 
thus it does not have a homomorphism to its proper subgraph. 
\end{proof}

This theorem was announced as a conjecture in the author's thesis~\cite{rs-thesis}, 
together with a part of a possible proof. The proof was finished by 
Engström, Färnqvist, Jonsson, and Thapper~\cite[Proposition~5.11]{Svedi2}, who did prove the 
inequality in Lemma~\ref{l:binomineq}.

We'll use the following result (see Lemma~13.7.4 and~13.1.2 of~\cite{AGT}).

\def\lambdamin{\lambda_{\rm min}} 
\begin {lemma}   \label{bipspectral}
Let $G$ be an $r$-regular graph with $n$ vertices, let $\lambdamin$ 
be the smallest eigenvalue of $G$. 
Then $b(G) \le \frac 12 (1-\frac \lambdamin r)$. 
\end {lemma}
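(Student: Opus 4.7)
The plan is to prove Lemma~\ref{bipspectral} by the classical Delorme--Poljak / Mohar--Poljak eigenvalue bound for \textsc{MaxCut}. The idea is to encode a cut as a $\pm 1$ vector, recognize the cut size as (a quarter of) a quadratic form in the Laplacian, and then relax to a continuous sphere where the Rayleigh quotient gives the eigenvalue bound.

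First, I would fix any cut $\delta(S)$ and encode it by $x\in\{-1,+1\}^n$ with $x_i=+1$ if $i\in S$ and $x_i=-1$ otherwise. For the Laplacian $L=D-A$ of $G$, a one-line computation gives
\begin{equation*}
x^T L x \;=\; \sum_{ij\in E(G)} (x_i-x_j)^2 \;=\; 4\,|\delta(S)|,
\end{equation*}
since each edge contributes $0$ if its endpoints are on the same side and $4$ otherwise. Hence
\begin{equation*}
\MC(G) \;=\; \tfrac14\,\max_{x\in\{-1,+1\}^n} x^T L x.
\end{equation*}

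Next, I would relax the discrete maximum. Every $x\in\{-1,+1\}^n$ satisfies $\|x\|^2=n$, so
\begin{equation*}
\MC(G) \;\le\; \tfrac14\,\max_{\|x\|^2=n} x^T L x \;=\; \tfrac{n}{4}\,\max_{\|x\|=1} x^T L x \;=\; \tfrac{n\lambda_n}{4},
\end{equation*}
where the last equality is the Rayleigh--Ritz characterization of the largest eigenvalue of the symmetric positive semidefinite matrix $L$. Dividing by $m=|E(G)|$ yields $b(G)=\MC(G)/m \le n\lambda_n/(4m)$, which is the claim.

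I do not foresee any real obstacle here: the argument is completely standard, and the only step that required a choice is how to pass from the discrete to the continuous problem, which is handled simply by noting $\|x\|^2=n$ and invoking Rayleigh. If one wanted a slightly sharper version (as in Mohar--Poljak), one could first project $x$ onto the subspace orthogonal to $\mathbf{1}$ (which does not change $x^T L x$, since $L\mathbf{1}=0$) and then use the fact that the maximizer over this subspace is again $\lambda_n$; this does not improve the bound above but explains why the bound is often tight precisely when the top eigenvector is close to balanced in sign.
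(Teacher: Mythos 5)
Your argument is correct and is the standard Delorme--Poljak eigenvalue bound. Note, though, that the paper does not prove this lemma at all: it is quoted directly as Lemma~13.7.4 of Godsil--Royle's \emph{Algebraic Graph Theory} (the reference \cite{AGT}), together with Lemma~13.1.2 for the relation between adjacency and Laplacian spectra of regular graphs. So there is no proof in the paper to compare against, but the argument you give -- writing $\MC(G)=\tfrac14\max_{x\in\{-1,+1\}^n}x^TLx$, relaxing to the sphere $\|x\|^2=n$, and invoking Rayleigh--Ritz -- is exactly the classical proof that the cited source supplies, and every step is sound.
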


The following lemma was proved (using a clever induction) 
by Engström, Färnqvist, Jonsson, and Thapper~\cite[Proposition~5.11]{Svedi2}, 
resolving thus a question from the author's thesis~\cite{rs-thesis}. 

\begin {lemma}   \label{l:binomineq}
Let $k$, $n$ be integers such that $k \le n < 2k$ and
$k$~is even, let $x$~be an integer such that $1 \le x \le n$.
Then 
$$
  \sum_{\mbox{odd $t$}} \binom xt \binom {n-x}{k-t} \le \binom{n-1}{k-1} \,.
$$
\end {lemma}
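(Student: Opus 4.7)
To prove Lemma~\ref{l:binomineq}, I would first reformulate the inequality via generating functions, then reduce it by Pascal's identity to an equivalent monotonicity statement, and finally prove that monotonicity by induction, running it in tandem with the original inequality.

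\textbf{Reformulation and reduction.} Write $f(n,k,x) := \sum_{t\text{ odd}} \binom{x}{t}\binom{n-x}{k-t}$. Combining Vandermonde's identity $\sum_t \binom{x}{t}\binom{n-x}{k-t} = \binom{n}{k}$ with its signed version $\sum_t (-1)^t \binom{x}{t}\binom{n-x}{k-t} = [z^k](1-z)^x(1+z)^{n-x}$ gives $f(n,k,x) = \tfrac{1}{2}\binom{n}{k} - \tfrac{1}{2}\,[z^k](1-z)^x(1+z)^{n-x}$. Since $\binom{n-1}{k-1} = \frac{k}{n}\binom{n}{k}$, the desired inequality is equivalent to the coefficient bound
$$ [z^k](1-z)^x(1+z)^{n-x} \ge \frac{n-2k}{n}\binom{n}{k}, $$
whose right side is non-positive because $n<2k$; moreover, for $k$ even the substitution $z\mapsto -z$ reveals the symmetry $x\leftrightarrow n-x$. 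Next, expanding $\binom{x}{t} = \binom{x-1}{t} + \binom{x-1}{t-1}$ and applying Vandermonde to the shifted piece yields the recurrence
$$ f(n,k,x) = f(n-1,k,x-1) + \binom{n-1}{k-1} - f(n-1,k-1,x-1), $$
so the inequality $f(n,k,x) \le \binom{n-1}{k-1}$ is equivalent to the monotonicity statement
$$ (\ast)\quad f(N,K,X)\ge f(N,K+1,X)\quad\text{for $K$ odd, $K\le N\le 2K$, $0\le X\le N$,} $$
where $(N,K,X) = (n-1,k-1,x-1)$.

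\textbf{Induction for $(\ast)$.} The base case $N=K$ is immediate as $f(K,K+1,X)=0$. For the inductive step, apply the Pascal recurrence above to both $f(N,K,X)$ and $f(N,K+1,X)$; after cancellation one gets
$$ f(N,K,X) - f(N,K+1,X) = \Delta_1 + \Delta_2 + \Delta_3, $$
where $\Delta_1 = f(N-1,K,X-1) - f(N-1,K+1,X-1)$ is non-negative by the inductive hypothesis, $\Delta_3 = \binom{N-1}{K-1} - \binom{N-1}{K}$ is non-negative precisely because $N\le 2K$, and $\Delta_2 = f(N-1,K,X-1) - f(N-1,K-1,X-1)$ is a cross term comparing the odd value $K$ against the even value $K-1$.

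\textbf{Main obstacle.} The delicate part is controlling $\Delta_2$: its direction is opposite to that of $(\ast)$, so the inductive hypothesis does not apply directly and $\Delta_2$ can be negative. The ``clever induction'' of Engström, Färnqvist, Jonsson, and Thapper overcomes this by proving the original inequality and $(\ast)$ in tandem, strengthening the inductive invariant so that any deficit in $\Delta_2$ is absorbed by the gains in $\Delta_1$ and $\Delta_3$. The hypothesis $n<2k$ is tight: it is used not only to guarantee $\Delta_3\ge 0$ but also to keep all parameters generated by the recurrences inside the allowable region, with the boundary case $N=2K$ being the most delicate.
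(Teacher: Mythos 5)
Your reformulation is correct: the generating-function identity $f(n,k,x)=\tfrac12\binom nk - \tfrac12[z^k](1-z)^x(1+z)^{n-x}$, the identity $\binom{n-1}{k-1}=\tfrac kn\binom nk$, and the Pascal/Vandermonde recurrence $f(n,k,x)=f(n-1,k,x-1)+\binom{n-1}{k-1}-f(n-1,k-1,x-1)$ all check out, and with $(N,K,X)=(n-1,k-1,x-1)$ the lemma is indeed equivalent to the monotonicity statement $(\ast)$ over the stated parameter range. The base case $f(K,K+1,X)=0$ is right, and the decomposition $f(N,K,X)-f(N,K+1,X)=\Delta_1+\Delta_2+\Delta_3$ with $\Delta_1\ge 0$ by induction and $\Delta_3\ge 0$ exactly when $N\le 2K$ is also correct.

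But this is where it stops being a proof. The ``Main obstacle'' paragraph concedes that the inductive hypothesis does not control the cross term $\Delta_2=f(N-1,K,X-1)-f(N-1,K-1,X-1)$ (the IH compares $K$ to $K+1$ at odd $K$, while $\Delta_2$ compares odd $K$ to even $K-1$), and you then simply assert that Engstr\"om, F\"arnqvist, Jonsson, and Thapper ``overcame this'' by strengthening the inductive invariant --- without stating what that strengthened invariant is, without deriving a quantitative lower bound on $\Delta_2$, and without verifying anything at the extremal boundary $N=2K$ where $\Delta_3=0$ and there is no slack. That is an accurate diagnosis of where the difficulty lies, not an argument that resolves it. For the record, the paper itself does not supply a proof of this lemma either: it records that the inequality was conjectured in \cite{rs-thesis} and proved in \cite[Lemma 4.4]{Svedi2} by a ``clever induction,'' so there is no in-paper proof for your reduction to be measured against. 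To turn your sketch into a proof you would need to make explicit the auxiliary statement proved ``in tandem,'' show it is strong enough to absorb the possible negativity of $\Delta_2$, and close the induction; as written the essential step of the lemma is still missing.
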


\begin {proof}(of Theorem~\ref{thm:chiqinvar})
Since Lemma~\ref{chiqinvareasy} provides the upper bound, we only need to establish the
lower bound. Suppose first that $k$~is even.
We shall use a spanning subgraph of $Q_{n/k}=Q_n^{\ge k}$, that contains
only edges of length precisely~$k$; we shall use $Q_n^{=k}$ to denote 
this subgraph. 

By Lemma~\ref{xTT} and~\ref{xb} we have that
$x(Q_{n/k}) \ge x(Q_n^{=k}) = 1/b(Q_n^{=k})$. By Lemma~\ref{bipspectral} 
it is enough to determine the smallest eigenvalue~$\lambdamin$ of~$Q_n^{=k}$. 
As $Q_n^{=k}$ is $\binom nk$-regular, we have 
$$
 \frac {1}{b(Q_n^{=k})} \ge 
   \frac {2}{1 - \lambdamin/\binom nk  } \,.
$$

It is standard (see, e.g., Problem 11.8 in~\cite{Lovasz-CPE}
or the theory of Association Schemes in Chapter 30 of \cite{LintWilson})  
that the eigenvalues of~$Q_n^{=k}$ are 
$$
   \sum_{t=0}^k (-1)^t \binom xt \binom {n-x}{k-t}  , 
$$
By using Vandermonde's identity and Lemma~\ref{l:binomineq}, we get that the above sum 
is at least $\binom nk (1-2k/n)$, which is equal to the sum for $x=1$.
Thus the smallest eigenvalue $\lambdamin$ equals $\binom nk (1-2k/n)$, 
and we obtain $x(Q_{n/k}) \ge n/k$ as desired.

For odd values of~$k$ we cannot use the same method, as 
then $Q_n^{=k}$~is bipartite, hence $b(Q_n^{=k})=1$. However, observe
that $Q_{\frac{n+1}{k+1}} \hom Q_{n/k}$, hence by Lemma~\ref{xTT} 
and the result for (even)~$k+1$ we have 
$$
  x(Q_{n/k}) \ge x(Q_{\frac{n+1}{k+1}}) \ge \frac{n+1}{k+1}\,.
$$
\end {proof}

\begin{corollary} 
The set $\{ x(G) \mid \hbox{$G$ is a graph}\}$ equals $\Q \cap [1,2]$. 
Consequently, the set 
$\{ \chiq(G)  \mid \hbox{$G$ is a graph}\}$ equals $\Q \cap [2, \infty)$. 
\end{corollary} 

\section{Semidefinite approximation} \label{sec:SDP} 

In this section we show how to approximate $\chi_q$ in polynomial time  
up to a factor of~$\pi/2$.  
Key to this approximation is the \emph{vector coloring}, introduced 
by \cite{KMS} based on the Lovász' $\vartheta$ function. 
The concept of vertex coloring is extended by using high-dimensional unit vectors as colors, 
and requiring adjacent vertices to be assigned distant vectors. 
Precisely: given a graph $G$ and real $t<0$ 
consider a mapping $f: V(G) \to \R^n$ (where $n=|V(G)|$), so that 
\begin{itemize}
  \item $\|f(v)\|_2 = 1$ for every vertex $v$ and 
  \item $\langle f(u), f(v) \rangle \le t$ for every edge $uv$. 
\end{itemize}
We let $t(G)$ denote the minimum $t$ such that function $f$ with 
the above properties exists. 
The \emph{vector chromatic number} of $G$ is defined as 
$\chi_v(G) = 1 - \frac{1}{t(G)}$. 

As these conditions for $t(G)$ can be formulated as a semidefinite program, 
the minimum indeed exists; more importantly, $t(G)$ can be approximated 
with an absolute error~$\eps$ in time polynomial in $n$ and $\log\tfrac 1\eps$. 
Indeed, Karger, Motwani and Sudan \cite[Lemma~3.2]{KMS} prove that if a graph $G$ has 
$\chi_v(G)=k$ then it is possible to find a vector $(k+\eps)$-coloring in 
time polynomial in $n$ and $\log 1/\eps$ --- in particular, one finds approximation 
to~$\chi_v$ up to an absolute error $\eps$. 

It is easy to see that $\chi_v(G) \le \chi(G)$ -- given 
a proper $k$-coloring, we may map all vertices of one color to 
one vertex of a simplex with $k$ vertices. This will lead 
to $t = -\frac{1}{k-1}$, and so indeed $\chi_v(G) \le k$. 
However, the fraction $\chi(G)/\chi_v(G)$ can be arbitrarily large \cite{FLS}, 
in fact as large as $n/polylog(n)$ (where $n = |V(G)|$); 
this contrasts sharply with Theorem~\ref{t:chivchiq}.  

For further properties of $\chi_v$ see \cite{KMS} and \cite{oghlan}. 
In the latter the following is shown.

\begin{theorem}[\cite{oghlan}]  \label{t:chivrandom} 
$c_1 \sqrt{np} \le \chi_v(G_{n,p}) \le c_2 \sqrt{np}$
with probability $1-o(1)$. 
\end{theorem}

Now we proceed to show to connection between $\chi_q$ and $\chi_v$. 

\begin{theorem} \label{t:chivchiq} 
For every graph $G$ we have 
$$
    \chi_v(G) \le \chi_q(G) \le \frac{\pi}{2} \chi_v(G) \,.  
$$
\end{theorem}

\begin{proof} 
We prove the lower bound first. Recall that $\chi_q(G) = \frac{2}{2-x(G)}$ 
and $x(G) = n/k$, for some $n$, $k$ where there is a $k$-cover of~$G$ by $n$~cuts. 
(The fact that the infimum in the definition of $x(G)$ is attained follows 
from the linear-programming reformulation, see Equation~\eqref{eq:LP1}.) 
Equivalently, there is a mapping $g:V(G) \to \{ \pm 1\}^n$ (the $i$-th 
coordinate encodes the $i$-th cut so that for every edge $uv$ the
vectors $g(u)$ and $g(v)$ differ in $\ge k$ coordinates. 
Put $f(v) = g(v)/\sqrt{n}$. Obviously, each $f(v)$ is a unit vector, 
while for every edge $uv$ we have 
$$
  \langle f(u), f(v) \rangle = 1 - \frac{2d_H( g(u), g(v) )}{n} \le 1 - \frac{2k}{n} = 1 - \frac 2{x(G)} \,. 
$$
Therefore, for this $f$ we get $t \le 1-2/x(G)$. Consequently, 
$$ 
 \chi_v(G) \le  1 - \frac{1}{t} \le 1 - \frac{x(G)}{x(G)-2} = \frac2{2-x(G)} = \chi_q(G) \,. 
$$  

For the upper bound we use probabilistic approach, motivated by 
the algorithm for approximating MAXCUT by Goemans and Williamson \cite{GW}. 
Consider a mapping $f$ as above, the scalar products are at most $t$ with 
$\chi_v(G) = 1-1/t$. 
For a large $N$, we choose $N$ uniformly random hyperplanes in $\R^n$ through the origin. 
With probability $1$ none of them contains any of the points $f(v)$ for $v \in V(G)$, 
therefore each hyperplane defines a cut. We shell prove that with probability $1-o(1)$ 
this cut covering gives us the desired bound. 

To this end, consider an edge $uv \in E(G)$, let $\alpha$ be the angle between 
the unit vectors $f(u)$ and $f(v)$. The following elementary observation
(used also in~\cite{GW}) is crucial for the calculation: 
\begin{center}
A random hyperplane through origin separates $f(u)$ and $f(v)$ with 
probability $\frac{\alpha}{\pi}$. 
\end{center}
For an edge $e=uv$ let $X_e$ be the random variable that counts how many of the $N$ hyperplanes
separate the end-vertices of $e$. Obviously, $X_e$ follows a binomial distribution $Bin(N,p)$ 
with $p = \frac{\alpha}{\pi}$. We have $\cos \alpha = \langle f(u), f(v) \rangle \le t$, so 
$p \ge \frac{\arccos t}{\pi}$. 
By the Chernoff inequality we have $Pr[ X_e < pN - s ] < e^{-\frac{s^2}{2Np}}$. 
Putting $s = \lceil N^{2/3}\rceil$ 
we obtain 
$$
  Pr[ X_e < pN - \lceil N^{2/3} \rceil ] < e^{-\frac{N^{1/3}}{2p}} = o(1) 
$$
(the $o(1)$ is with respect to $N$ growing to infinity). 
Thus, with probability $1 - \binom n2 o(1) = 1-o(1)$ we have 
$X_e \ge pN - \lceil N^{2/3} \rceil$ for every edge $e$. So for every large enough $N$ there 
is a cut covering achieving this and from the definition of $x(G)$, we get that 
$$
  x(G) \le \frac{N}{pN - \lceil N^{2/3} \rceil} = \frac {1}{p} (1+o(1)) \,. 
$$ 
As we may choose arbitrarily large $N$, we get from here that $x(G) \le \frac 1p = \frac \pi{\arccos t}$. 
Now from the definition we obtain 
$$
  \frac{\chi_q(G)}{\chi_v(G)} = \frac{\frac {2}{2-x(G)}}{1- \frac 1t} 
    \le \frac{\frac {2}{2- \frac {\pi}{\arccos t} }}{1-\frac 1t}   
    = \frac{t \arccos t}{(\arccos t - \pi/2)(t-1)}
$$
Putting $t=\cos \alpha$ and $\beta = \alpha - \pi/2$ (so that $t = -\sin \beta$), 
the last expression equals
$$
  \frac{\sin \beta}{\beta} \frac{\beta + \frac \pi2}{\sin \beta + 1} \le  1\cdot \frac{\pi}{2} 
$$
(we used the elementary estimate $\tfrac 2\pi \beta \le \sin \beta \le \beta$
valid for $\beta \in [0, \pi/2]$). 
\end{proof} 

We note that the above proof also yields bound 
$\chi_q(G) \le {1}/\bigl(1- \frac{\pi}{2\arccos \frac1{1-\chi_v(G)}}\bigr)$, which 
is, for small values of $\chi_v(G)$, slightly better than the above theorem.

\begin{corollary} \label{thm:apx} 
There is a polynomial-time algorithm that approximates $\chi_q(G)$ with 
approximation factor almost $\frac\pi2$. More precisely: to get 
an approximation factor at most $\frac\pi2 (1+\eps)$ we need an 
algorithm polynomial in $|V(G)|$ and $\log 1/\eps$. 
\end{corollary}

\begin{corollary} \label{thm:chif} 
For every graph $G$ we have 
$$
    \chi_q(G) \le \frac{\pi}{2} \chi_f(G) \,.  
$$
Moreover, there is a sequence of graphs for which $\chi_q(G)$ is bounded, 
while $\chi_f(G)$ is unbounded. 
\end{corollary}

\begin{proof}
For the first part it is enough to use Theorem~\ref{t:chivchiq}, the 
bound $\chi_v(G) \le \vartheta(\overline G)$ (Theorem~8.2 of~\cite{KMS}) 
and the well-known bound $\vartheta(\overline G) \le \chi_f(G)$. 
We use Theorem~1.2 of Feige, Langberg, and Schechtman~\cite{FLS}: 
There are infinitely many graphs $G$ that are vector 3-colorable and satisfy
$\alpha(G) \le n^{0.843}$ (where $n$ is the number of vertices of~$G$). 
Each such graph~$G$ satisfies $\chi_q(G) \le 3\pi/2 < 5$, 
and $\chi_f(G) \ge n/n^{0.843} = n^{0.157}$. 
\end{proof}

Let us note here an exciting development related to the above mentioned result 
of Feige, Langberg, and Schechtman~\cite{FLS}. If we are given a 3-colorable graph~$G$, 
it is still computationally hard to find a 3-coloring of it. This lead 
Karger, Motwani, and Sudan~\cite{KMS} to their definition of vector chromatic number. 
As $\chi_v(G) \le \chi(G) \le 3$, and vector coloring is computationally tractable, 
we can find a vector 3-coloring of~$G$ and then use various rounding techniques to 
find a coloring of~$G$. The best result in this direction is $O(n^{0.19996})$ colors 
due to Kawarabayashi and Thorup~\cite{KT}. 
As a limit to this approach Feige, Langberg, and Schechtman~\cite{FLS} observe that 
just using the fact that $\chi_v(G) \le 3$ does not prevent a graph from having 
chromatic number as large as $\Omega(n^{0.157})$, thus to efficiently color 3-colorable 
graph with less colors (if at all possible), a different technique is needed.

\section{Concluding Remarks}

\paragraph{Bipartite subgraph polytope}   

For a bipartite subgraph $B \subseteq G$, let $c_B$ be the
characteristic vector of~$E(B)$. Bipartite subgraph polytope
$P_B(G)$ is the convex hull of points $c_B$, for all bipartite
graphs $B \subseteq G$.
The study of this 
polytope was motivated by the MAXCUT problem:
looking for a weighted maximum cut of~$G$ simply means solving a linear
program over~$P_B(G)$. Thus, for graphs where $P_B(G)$ has
simple description, we can have a polynomial-time algorithm for
MAXCUT; this in particular happens for weakly bipartite graphs
(which include planar graphs), see~\cite{GrPul}.
We apply $P_B$ to yield yet another definition of~$x$.

\begin {theorem}   \label{xfacets}
$
   x(G) = \max \{ \sum_{e\in E(G)} y_e \mid 
       \hbox{$y \cdot c \le 1$ defines a facet of~$P_B(G)$} \} 
$
\end {theorem}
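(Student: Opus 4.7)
The plan is to combine LP duality with Fulkerson's antiblocker theorem. Starting from the dual LP~\eqref{eq:LP2},
$$
  x(G) = \max \left\{ \sum_{e \in E(G)} y_e : y \ge 0,\ \sum_{e \in X} y_e \le 1 \text{ for every cut } X \right\}.
$$
The first step is to rewrite the feasible region in terms of $P_B(G)$. Every bipartite subgraph $B$ of $G$ satisfies $E(B) \subseteq \delta(W)$ for some $W \subseteq V(G)$ (extend any $2$-coloring of $V(B)$ to $V(G)$), so $c_B \le c_{\delta(W)}$ coordinate-wise. Since $y \ge 0$, the cut constraints already imply $c_B \cdot y \le 1$ for every bipartite subgraph $B$, and by convexity for every $c \in P_B(G)$. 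Hence the feasible region equals the antiblocker
$$
  Z = \{y \ge 0 : c \cdot y \le 1 \text{ for all } c \in P_B(G)\}.
$$

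Next, I invoke Fulkerson's antiblocker theorem. Because $P_B(G)$ is a down-monotone $0/1$ polytope in $\mathbb{R}^{E(G)}_{\ge 0}$ (every subset of a bipartite edge set is bipartite), the coefficients of any non-trivial facet-defining inequality are non-negative. After normalizing the right-hand side to $1$, such inequalities $y \cdot c \le 1$ are in bijection with the \emph{dominant} vertices of $Z$, that is, vertices not coordinate-wise dominated by any other point of $Z$. (The trivial facets $c_e \ge 0$ are not of the form $y \cdot c \le 1$.)

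Finally, the optimum of $\sum_e y_e$ over the compact polytope $Z$ is attained at a vertex $y^*$. Since the objective is strictly increasing in each coordinate, no other point of $Z$ can dominate $y^*$, so $y^*$ must be dominant. By the previous paragraph, $y^* \cdot c \le 1$ defines a facet of $P_B(G)$; conversely, every facet-defining vector $y$ is feasible for the LP, so we obtain exactly
$$
  x(G) = \max\left\{ \sum_{e \in E(G)} y_e : y \cdot c \le 1 \text{ defines a facet of } P_B(G) \right\}.
$$
The main delicate step is the invocation of antiblocker duality: verifying the correspondence between dominant vertices of $Z$ and non-trivial facet vectors of $P_B(G)$, which is standard but deserves explicit mention.
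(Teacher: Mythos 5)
Your proof is correct and takes essentially the same route as the paper: LP duality applied to program~\eqref{eq:LP2}, followed by the observation that the dual feasible region is the polar/antiblocker of $P_B(G)$, whose relevant vertices correspond to facets of $P_B(G)$. You are somewhat more careful than the paper's terse argument, in particular in noting that the feasible region of~\eqref{eq:LP2} coincides with the antiblocker of $P_B(G)$ (rather than of the cut polytope) because $P_B(G)$ is down-monotone, and in explicitly ruling out the trivial facets $c_e \ge 0$; these are real gaps in the paper's write-up that your version fills in.
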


\begin {proof}
By LP duality~$x(G)$ is a solution to the program~(\ref{eq:LP2}).
This means, that we are maximizing over such $y$, that
for each cut~$X$ satisfy $y \cdot c_X \le 1$.
As the convex hull of vectors~$c_X$ is~$P_B$, we are maximizing
the sum of coordinates of an element of the dual polytope~$P_B^*$. This maximum
is attained for some vertex of~$P_B^*$, that is for $y$ 
such that $y \cdot c \le 1$ defines a facet of~$P_B$.
\end {proof}

`Natural' facets of~$P_B(G)$ are defined by 
$\sum_{e \in E(H)} y_e \le \MC(H)$ for some $H \subseteq G$. 
(This inequality is satisfied for every graph $H$, but it
doesn't always define a face of maximal dimension.)
This proves the following observation (we add a direct proof, too).

\begin {lemma}   \label{xB}
$
  x(G) \ge  1/(\min_{H \subseteq G} b(H)) 
$
\end {lemma}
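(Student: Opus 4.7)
The plan is to give a direct double-counting argument that generalizes the proof of Lemma~\ref{xb} from $H=G$ to an arbitrary subgraph $H\subseteq G$. Fix any $H \subseteq G$ and let $(X_1,\dots,X_n)$ be an optimal $n/k$-cover of $G$, so $x(G)=n/k$. The key observation is that if $X_i = \delta_G(W)$ is a cut in~$G$, then its restriction to $E(H)$ satisfies $X_i\cap E(H)=\delta_H(W\cap V(H))$, so it is a cut of~$H$. In particular $|X_i\cap E(H)|\le \MC(H)=b(H)\,|E(H)|$.

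Now I would count the sum $S=\sum_{i=1}^n |X_i\cap E(H)|$ in two ways. From the cut side, $S\le n\cdot b(H)\,|E(H)|$. From the edge side, every edge of~$H$ is an edge of~$G$, hence covered by at least~$k$ of the $X_i$'s, giving $S\ge k\,|E(H)|$. Combining these yields $n/k\ge 1/b(H)$; since $H$ was arbitrary, $x(G)\ge \max_{H\subseteq G} 1/b(H)=1/\min_{H\subseteq G} b(H)$.

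As an alternative route, one could deduce the bound from Theorem~\ref{xfacets}, or more simply from weak LP duality applied to program~(\ref{eq:LP2}): define $y_e=1/\MC(H)$ for $e\in E(H)$ and $y_e=0$ otherwise; the same observation that cuts of~$G$ restrict to cuts of~$H$ shows $y\cdot c_X\le 1$ for every cut~$X$ of~$G$, so $y$ is feasible, and $\sum_e y_e=|E(H)|/\MC(H)=1/b(H)$.

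There is no real obstacle; the only point that needs attention is the set-theoretic identity $\delta_G(W)\cap E(H)=\delta_H(W\cap V(H))$, which is where the hypothesis that $H$ is a subgraph (and not, say, an arbitrary edge subset) is used.
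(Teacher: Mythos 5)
Your proof is correct and is essentially the paper's proof unfolded: the paper simply cites Lemma~\ref{xTT} (subgraph inclusion gives a cut-continuous map, hence $x(H)\le x(G)$) and Lemma~\ref{xb} ($x(H)\ge 1/b(H)$), and your double-count just inlines the proofs of those two lemmas, with the restriction identity $\delta_G(W)\cap E(H)=\delta_H(W\cap V(H))$ playing the role of the cut-continuity of the inclusion.
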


\begin {proof}
Suppose $H \subseteq G$. Then there exists a cut-continuous mapping (indeed, a homomorphism) 
from~$H$ to~$G$, which by Lemma~\ref{xTT} and~\ref{xb} implies $1/b(H) \le x(G)$.
\end {proof}

Let us return to Lemma~\ref{xb} for a while. In general
$x(G)$ and $1/b(G)$ can be as distant as possible: Let $G$ be a
disjoint union of a $K_n$ and $K_{N,N}$. Now $x(G)$ is close to~2
(because $G$~is homomorphically equivalent to~$K_n$, hence $x(G) =
x(K_n)$) and $b(G)$ is close to~1 (provided $N$ is sufficiently large).
This motivates Lemma~\ref{xB}, which improves the original bound.
A natural question is whether this improvement gives the correct
size of~$x$.
It turns out it does not (contrary to a conjecture in the 
author's thesis). In \cite{Svedi2} it is shown, that the
circular clique $K_{11/4}$ is a counterexample.

\paragraph{A failed approach} 
The proof of Theorem~\ref{thm:chiqinvar} could be attempted by another way:
First, observe that the Kneser graph $K(n,r)$ is a subgraph of~$Q_{n/2r}$.
By Lemmas~\ref{xTT} and~\ref{xb} we have $x(Q_{n/2r}) \ge x(K(n,r)) \ge \frac 1{b(K(n,r))}$.
Thus, if we knew the value of $b(K(n,r))$ (and it turned out to be $2r/n$ for 
the range of $r$ we are interested in), we would be done.

In~\cite{PT} it is claimed that if $2r \le n \le 3r$ then,
indeed, $b(K(n,r)) = 2r/n$.
This would imply the conjecture for even~$k$ less than $3/2 \cdot n$;
unfortunately the proof in~\cite{PT} is incomplete (as already observed
by~\cite{BollobasLeader}). Thus, the true value of MAXCUT for Kneser graphs
remains open. 

\paragraph{Generalizations and future work}
As already mentioned in the introduction, the metric that is used 
in \cite{Svedi1, Svedi2} to study approximability of MAX-$H$-COLORING
can be computed from a generalization of fractional covering 
by cuts. One only needs to consider more general edge sets in place of cuts, 
namely edge sets of graphs that are homomorphic to $H$. Then the cube $Q_{n/k}$
in Equation~\eqref{eq:chiqalt} is replaced by appropriately defined
power of~$H$. One may also use this motivation to define $H$-continuous
mappings as follows. We call a subset $X \subseteq E(G)$ an $H$-cut
in $G$ whenever there is a mapping $g : V(G) \to V(H)$ for which
$g^{-1}(E(H)) = X$. We say a mapping $f: E(G_1) \to E(G_2)$ is
\emph{$H$-continuous} whenever a preimage of each $H$-cut is
an $H$-cut. This notion deserves further attention. 
Some preliminary observations are obtained in~\cite{Svedi2}. 

\paragraph{Possible use of recent techniques for approximating MAXCUT} 
In recent years, a lot of attention has been put to various ways to approximate 
MAXCUT without using semidefinite programming. 
In particular, finding a combinatorial approximation algorithm is of interest; 
a nice algorithm based on random walks exists~\cite{KS}. 
It seems, however, that this method fails to approximate~$\chiq$, because it is based on 
local properties of the given graph, and $\chiq$~can be very large 
even for graphs that are locally trees (see Theorem~\ref{xsparse}). 
It would be interesting, though, if such techniques could be used for approximating~$\chiq$ 
for graphs of bounded degree. 

\paragraph{Number of cuts required} 

By definition, if $x(G) = t$ then there is a cut $n/k$-cover 
for some $n$, $k$ satisfying $t = \tfrac nk$. It would be nice 
to know how large $n$ is required. To be precise, define 
$n(G)$ to be the smallest $n$ as above. Then we let 
$$
  f(v) = \max \{ n(G) \mid \hbox{$G$ is a graph with $v$ vertices} \}  \,. 
$$
This maximum clearly exists (as there are only finitely many 
graphs on $v$ vertices). 

\begin{question}
  How fast doest $f(v)$ grow? Is $f(v) \le 2^v$? For what graph(s) is the maximum in the definition 
  of~$f(v)$ attained? 
\end{question}

The estimate by $2^v$ seems natural, as there is only $2^{v-1}$ 
different cuts in a graph on $v$ vertices. However, one may be 
forced to take some cuts repeatedly. 

\paragraph{Complexity} In view of the complexity of computing other variants of chromatic number, 
the following conjecture is natural. Note, however, that in contrast with 
chromatic or fractional chromatic number, cubical chromatic number can be 
approximated up to a constant factor. 

\begin{conjecture}
For any $s>2$ determining if an input graph $G$ satisfies 
$\chi_q(G) \le s$ is NP-complete. 
\end{conjecture}

Perhaps more importantly: how well can one approximate $\chi_q$ in polynomial time? 
Can one use the techniques of~\cite{AKK} to find a PTAS for $\chi_q(G)$ --- at least in the case 
when $G$~is dense? It is tempting to use the ellipsoid method to solve 
the linear program~\eqref{eq:LP2}, where results of~\cite{AKK} can serve as an 
(approximate) separation oracle. To do this, however, we need a PTAS 
for weighted MAX-CUT. While some results in this direction are known~\cite{VK}, 
they are not strong enough (the issue is that some weights may be much larger than the others, 
which basically makes our instance not dense).

\paragraph{Cubic graphs} For the reader's convenience we restate here 
Question~\ref{xsparsecubic}. For known partial results we refer the reader 
to Section~\ref{sec:basic}. 

\begin {question}   
Let $G$ be a cubic graph with no cycle of length $\le c$. 
How large can $x(G)$ (resp. $\chiq(G)$) be?
\end {question}

\section*{Acknowledgement}

I thank Matt DeVos and Stephan Thomassé for suggesting to use 
semidefinite programming to approximate the cubical chromatic number 
and for helpful discussion about the topic. 
I thank the anonymous referees for helpful comments leading to improved presentation. 

\bibliographystyle{rs-amsplain}
\bibliography{chiq}

\end{document}